\newtheorem{definition}{Definition}
\newtheorem{prop}{Proposition}
\newtheorem{remark}{Remark}
\newtheorem{exe}{Example}
\newtheorem{lemma}{Lemma}
\newtheorem{theorem}{Theorem}
\newtheorem{cor}{Corollary}
\DeclareMathOperator{\arcsinh}{arcsinh}
\DeclareMathOperator{\diam}{diam}
\DeclareMathOperator{\inj}{inj}
\DeclareMathOperator{\length}{l}
\title[Estimates for low Steklov eigenvalues]{Estimates for low Steklov eigenvalues of surfaces with several boundary components
}
\author{Hélène Perrin}
\date{}
\thanks{The author was supported by the Swiss National Science Fundation (SNSF), grant 200021\_196894.}
\keywords{Steklov problem, eigenvalue, lower bound, hyperbolic surface}
\subjclass[2010]{58J50, 58C40, 35P15}
\begin{document}

\begin{abstract}
In this article, we give computable lower bounds for the first non-zero Steklov eigenvalue $\sigma_1$ of a compact connected 2-dimensional Riemannian manifold $M$ with several cylindrical boundary components. These estimates show how the geometry of $M$ away from the boundary affects this eigenvalue. They involve geometric quantities specific to manifolds with boundary such as the extrinsic diameter of the boundary. In a second part, we give lower and upper estimates for the low Steklov eigenvalues of a hyperbolic surface with a geodesic boundary in terms of the length of some families of geodesics. This result is similar to a well known result of Schoen, Wolpert and Yau for Laplace eigenvalues on a closed hyperbolic surface.
\end{abstract}

\maketitle

\begin{section}{Introduction}

We study lower bounds for low Steklov eigenvalues of a compact connected 2-dimensional Riemannian manifold with several boundary components. Few lower bounds are known for the first non-zero Steklov eigenvalue $\sigma_1$. For a Riemannian manifold with connected boundary, there are generalizations (see e.g. \cite{Escobar1}, \cite{Escobar2} and \cite{Xiong2022})  of a result of Payne \cite{Payne1970} of 1970 saying that $\sigma_1$ of a convex domain in the plane is bounded from below by the minimum curvature of its boundary. In a general setting, Escobar \cite{Escobar1} has given a lower bound depending on an isoperimetric constant and the first non-zero eigenvalue of a Robin problem (see also \cite{AsmaMiclo} for lower bounds depending on eigenvalues of auxiliary problems). In \cite{JammesStek}, Jammes gives lower bounds in terms of isoperimetric constants. This result has been generalized by Hassannezhad and Miclo \cite{AsmaMiclo} for higher eigenvalues. These lower bounds however are not easily computable. In \cite{CGH}, Colbois, Girouard and Hassannezhad show that under some assumptions on the geometry of the boundary and near the boundary, Steklov eigenvalues are well approximated by the Laplace eigenvalues of the boundary. But when a connected Riemannian manifold has $b\geq2$ boundary components, such estimates do not give lower bounds for the $b$ first eigenvalues of the Steklov problem.

For obtaining lower bounds, conditions on the intrinsic geometry of the boundary as well as conditions on the geometry near the boundary are expected. But even if the boundary and the geometry of $M$ near the boundary are fixed, $\sigma_1$ is not bounded below if the boundary has multiple connected components, as shows the case of a right cylinder whose first eigenvalue tends to zero as its height goes to infinity.

In this article, we give explicit estimates for the $b$ first Steklov eigenvalues of some families of compact connected 2-dimensional Riemannian manifolds with $b\geq2$ boundary components having each one a neighborhood which is a right or a hyperbolic cylinder. This strong assumption on the geometry near the boundary allows us to focus on how the geometry of the manifold away from the boundary affects these eigenvalues.
The first result is an explicit lower bound for $\sigma_1$ of a 2-dimensional Riemannian manifold with a cylindrical boundary. It does not require any assumption on the Gaussian curvature and involves the following quantity.

\begin{definition}
Let $(M,g)$ be a compact connected $2$-dimensional Riemannian manifold with $b\geq 2$ boundary components. We consider the family of curves (not necessarily connected) not intersecting $\partial M$ and dividing $M$ into two connected components, each containing at least one connected component of $\partial M$. We let  $C(M)$ denote this family of curves and define
\[
\length(M):=\inf\{l(c): c\in C(M)\}
\]
where $l(c)$ is the length of the curve $c$.
\label{def1}
\end{definition}
We can now state the result.

\begin{theorem}
Let $(M,g)$ be a compact connected $2$-dimensional Riemannian manifold with a boundary having $b\geq 2$ components of length $a$. Assume that the boundary $\partial M=\Sigma_1\cup\dots\cup\Sigma_b$ has a neighborhood $V(\partial M)$ which is isometric to the union of disjoint right cylinders $\cup_{i=1}^{b}\Sigma_i\times [0,1)$. We have
\begin{align*}
\sigma_1(M)\geq \frac{\min\{\length(M),1\}^2}{2(b-1)a|M|}.
\label{est1}
\end{align*}
\label{prop1}
\end{theorem}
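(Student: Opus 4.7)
The plan is to use the variational characterization
\[
\sigma_1(M) = \inf\biggl\{ \frac{\int_M |\nabla f|^2 \, dv}{\int_{\partial M} f^2 \, ds} : f\in H^1(M),\ \int_{\partial M} f \, ds = 0,\ f \not\equiv 0\biggr\}
\]
and prove the quantitative trace estimate $\int_{\partial M} f^2 \leq \frac{2(b-1)a|M|}{\min\{\length(M),L\}^2} \int_M |\nabla f|^2$ for every admissible $f$. Setting $\bar f_i := (1/a)\int_{\Sigma_i} f$, the hypothesis $\int_{\partial M} f = 0$ gives $\sum_i \bar f_i = 0$, and I will split $\int_{\partial M} f^2 = a\sum_i \bar f_i^2 + \sum_i \int_{\Sigma_i}(f - \bar f_i)^2$ into a between-component and a within-component contribution.

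For the between-component part, the identity $\sum_{i<j}(\bar f_i - \bar f_j)^2 = b \sum_i \bar f_i^2$ combined with pigeonhole over the $\binom{b}{2}$ pairs produces indices $i \neq j$ with $\sum_k \bar f_k^2 \leq \tfrac{b-1}{2}(\bar f_i - \bar f_j)^2$. The heart of the proof is then the transport inequality
\[
(\bar f_i - \bar f_j)^2 \leq \frac{2|M|}{\min\{\length(M),L\}^2}\int_M |\nabla f|^2,
\]
which I plan to prove by expressing $a(\bar f_i - \bar f_j)$ as the integral of $\nabla f$ along a one-parameter family of curves joining $\Sigma_i$ to $\Sigma_j$, then invoking Cauchy--Schwarz on each curve and Fubini on the family. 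The cylindrical neighborhoods provide a natural foliation by radial rays of length $L$; across the core, I use a one-parameter deformation of a near-minimizing separating curve in $C(M)$ of length close to $\length(M)$, and the two regimes combine through the $\min\{\length(M), L\}$ in the denominator. The within-component oscillation is handled using the Poincaré inequality on each circle $\Sigma_i$ of length $a$ together with a cylindrical trace inequality (an integration of the identity $f(x,0) = f(x,t) - \int_0^t \partial_t f$ and its tangential analogue on $\Sigma_i \times [0,L)$), yielding a constant of order $a^2/L + L$, which is dominated by $\tfrac{2(b-1)a|M|}{\min\{\length(M),L\}^2}$ since the cylindrical assumption forces $|M| \geq b a L$.

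The main obstacle will be the construction of the transport family: it must realize an effective length capped by $\min\{\length(M), L\}$---rather than the sum $2L + \length(M)$ that a naive concatenation of cylindrical rays with a core crossing would give---while sweeping out an area of at most $|M|$ with controlled multiplicity (so that Fubini produces $\int_M |\nabla f|^2$ and not a larger overlap integral). The cylindrical hypothesis is essential for providing the radial-ray foliation in the $L$ regime, whereas the $\length(M)$ regime requires a tubular family around a near-minimizing curve in $C(M)$, with careful bookkeeping at the junction between the cylindrical portions and the core portion of the family.
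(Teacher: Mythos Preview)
Your approach is genuinely different from the paper's. The paper never works with the variational characterization directly; instead it proves (Lemma~\ref{lemma1}) that the Jammes--Cheeger constants satisfy $h_1(M)\ge \frac{2\min\{\length(M),L\}}{|M|}$ and $h_2(M)\ge \frac{\min\{\length(M),L\}}{(b-1)a}$ by a short case analysis on domains $D$ (does $\partial D$ meet $\partial M$? is it trapped in a cylindrical collar? does it contain a curve of $C(M)$?), and then invokes the inequality $\sigma_1\ge \tfrac14 h_1 h_2$. The quantity $\length(M)$ enters there because the level sets of an eigenfunction are separating curves, exactly matching the definition.

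Your plan, by contrast, needs connecting curves, and this is where it breaks. In the transport step you want to write $a(\bar f_i-\bar f_j)$ as an integral of $\nabla f$ along a one-parameter family of paths from $\Sigma_i$ to $\Sigma_j$ and then extract a factor $\min\{\length(M),L\}$ as an ``effective length''. But any path from $\Sigma_i$ to $\Sigma_j$ already has length at least $2L$ just from the two cylindrical collars, so no choice of family can realize an effective length bounded by $L$, let alone by $\length(M)$. More fundamentally, $\length(M)$ is the infimal length of \emph{separating} curves; it has no direct interpretation as a length of \emph{connecting} curves, and ``deforming a near-minimizing separating curve in $C(M)$'' does not produce paths joining $\Sigma_i$ to $\Sigma_j$ --- it produces more separating curves. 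What $\length(M)$ does control is the \emph{width} of any embedded strip joining $\Sigma_i$ to $\Sigma_j$: every cross-section of such a strip is a curve in $C(M)$, hence has length $\ge \length(M)$. So the quantity you should be tracking is the transverse width of the family, not the longitudinal length, and the inequality you can hope for is of the form
\[
w\,\bigl|\bar f_i-\bar f_j\bigr| \;\le\; \int_{\text{strip}} |\nabla f|\;\le\; |M|^{1/2}\Bigl(\int_M|\nabla f|^2\Bigr)^{1/2},
\]
with $w$ the width. Making $w$ as large as $\length(M)$ is a max-flow/min-cut statement on the surface, which is plausible but is real work you have not supplied; and even then you must control how the family meets $\Sigma_i$ and $\Sigma_j$ (the starting arc has length $w\le \length(M)\le a$, not $a$, so you still have to pass from an arc-average to $\bar f_i$). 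As written, the ``main obstacle'' you flag is not merely a technical construction to be filled in --- the stated target (effective length $\min\{\length(M),L\}$) is impossible, and the correct reformulation (effective width via min-cut) changes the structure of the estimate.
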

Examples \ref{excyl} and \ref{ex3} in Section \ref{sec3} show that the exponent of the geometric quantities involved in the lower bound are optimal. Another natural question to ask for evaluating a lower bound is how close to $\sigma_1$ it is. We construct a family of surfaces which shows that the presence of the area of the manifold in the denominator makes the lower bound given in Theorem \ref{prop1} sometimes inaccurate since it can go to zero while $\sigma_1$ is constant.

For surfaces whose Gaussian curvature is bounded below, we succeeded in removing the depency of the area from the lower bound. This estimate involves the extrinsic diameter of the boundary and the injectivity radius of a certain subset of $M$.
\begin{definition}
\label{defIntro}
Let $(M,g)$ be a compact connected Riemannian manifold with boundary $\partial M$.
\begin{enumerate}
\item The extrinsic diameter of the boundary is
\[
\diam_M(\partial M)=\max\{d(x,y)| x, y\in \partial M\},
\]
where $d(x,y)$ denotes the distance on $M$ induced by $g$.
\end{enumerate}
For simplification, we will omit the term "extrinsic" and call it the diameter of the boundary. Assume now that the boundary $\partial M=\Sigma_1\cup\dots\cup\Sigma_b$ has a neighborhood $V(\partial M)$ which is isometric to the union of disjoint right cylinders $\cup_{i=1}^{b}\Sigma_i\times [0,1)$.

\begin{enumerate}[resume]
\item Let $\Gamma$ be the subset of $M$
\begin{multline*}
\Gamma=\{x\in M, \exists p,q \in \partial M \text{ and a length minimising geodesic }\gamma \\\text{ between p and q such that } x\in \gamma\}.
\end{multline*}
We denote $\inj_{\partial M}(M)$ the injectivity radius of $\Gamma \setminus V(\partial M)\subset M$, that is
\[
\inj_{\partial M}(M)=\inj(\Gamma \setminus V(\partial M))=\min\{\inj_M(x): x\in \Gamma\setminus V(\partial M)\}.
\]
We note that $\inj_{\partial M}(M)\leq 1$.
\end{enumerate}
\end{definition}

\begin{theorem}
Let $(M,g)$ be a compact connected $2$-dimensional Riemannian manifold with a boundary having $b\geq 2$ boundary components of length $a$.
Assume that the boundary $\partial M=\Sigma_1\cup\dots\cup\Sigma_b$ has a neighborhood $V(\partial M)$ which is isometric to the union of disjoint right cylinders $\cup_{i=1}^{b}\Sigma_i\times [0,1)$.
Assume that the Gaussian curvature of $M$ satisfies $K(p)\geq \kappa$ for all $p\in M$, where $\kappa$ is a negative constant, and assume that $a\leq\diam_M(\partial M)$. Then we have an explicit positive constant $C(\kappa,b)$ such that
\[
\sigma_1(M)\geq C(\kappa,b)\frac{\inj_{\partial M}(M)}{a\diam_{M}(\partial M)}.
\]
\label{borneinf2}
\end{theorem}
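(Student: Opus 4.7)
The strategy is to refine the proof of Theorem~\ref{prop1} by localising the Dirichlet energy to a thin tube around a short geodesic in $\Gamma$ rather than using all of $M$, which is what forces the global area $|M|$ to appear. I start from the variational characterisation
\[
\sigma_1(M)=\inf_{u}\frac{\int_M|\nabla u|^2}{\int_{\partial M}u^2},
\]
where $u$ ranges over functions with $\int_{\partial M}u=0$. Writing $u_i$ for the mean of $u$ over $\Sigma_i$, a pigeonhole argument using $\sum_i a u_i=0$ produces two components $\Sigma_{i_0},\Sigma_{j_0}$ with $a(u_{i_0}-u_{j_0})^2\gtrsim \tfrac{1}{b(b-1)}\int_{\partial M}u^2$, up to the error $\sum_i\int_{\Sigma_i}(u-u_i)^2$. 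Since $L\leq 1$, a Poincaré-type inequality on the right cylinders $\Sigma_i\times[0,L)$ bounds this error by a universal multiple of $\int_{V(\partial M)}|\nabla u|^2$, which can be absorbed on the left.

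The key step is to bound $\int_M|\nabla u|^2$ from below by $a(u_{i_0}-u_{j_0})^2$ times the quantity $\inj_{\partial M}(M)/\diam_M(\partial M)$. Pick $p\in\Sigma_{i_0}$, $q\in\Sigma_{j_0}$ and a minimising geodesic $\gamma$ between them: by definition $\gamma\subset\Gamma$, and its length is at most $\diam_M(\partial M)$. Introduce Fermi coordinates $(s,t)$ along the portion of $\gamma$ outside $V(\partial M)$ on a tube $T_r$ of radius $r:=\tfrac{1}{2}\inj_{\partial M}(M)$; since $r$ avoids focal points, the exponential map is a diffeomorphism there. Prolong each transverse geodesic $t\mapsto c_t$ through the cylindrical ends by straight lines in the height direction, obtaining a foliation of a region $\Omega\subset M$ by curves $c_t\colon[0,\ell_t]\to M$ joining short arcs in $\Sigma_{i_0}$ and $\Sigma_{j_0}$, with $\ell_t\leq\diam_M(\partial M)+2L\leq 3\diam_M(\partial M)$ using $L\leq 1\leq\diam_M(\partial M)$. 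Cauchy--Schwarz along each $c_t$ yields
\[
(u(c_t(\ell_t))-u(c_t(0)))^2\leq \ell_t\int_0^{\ell_t}|\partial_s u(c_t(s))|^2\,ds,
\]
and Rauch/Günther comparison against the space of constant curvature $\kappa$ provides a lower bound $\alpha(\kappa)$ on the Jacobian of the Fermi chart on $T_r$, so that integration over $t\in[-r,r]$ against the area element controls $\int_\Omega|\nabla u|^2$ from below by
\[
\frac{\alpha(\kappa)\,r}{\diam_M(\partial M)}\int_{-r}^{r}(u(c_t(\ell_t))-u(c_t(0)))^2\,dt.
\]

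To finish, I would average the right-hand side over the transverse parameter to replace the pointwise boundary values $u(c_t(0)),u(c_t(\ell_t))$ by the averages $u_{i_0},u_{j_0}$, paying a further Poincaré correction on each $\Sigma_{i_0},\Sigma_{j_0}$ that is once more absorbed by a small fraction of $\int_M|\nabla u|^2$ thanks to $L\leq 1$ and $a\leq\diam_M(\partial M)$. Combining with the first paragraph gives
\[
\int_M|\nabla u|^2\geq C(\kappa,b)\,\frac{\inj_{\partial M}(M)}{a\,\diam_M(\partial M)}\int_{\partial M}u^2,
\]
and taking the infimum over admissible $u$ yields the theorem. The main obstacle I expect is organising this averaging step cleanly: the Cauchy--Schwarz bound gives pointwise information on pairs of points on the two boundary circles, and converting this to a lower bound involving full boundary averages requires the Poincaré-type inequalities on the cylindrical ends together with curvature comparison for the cross-sectional area to work in concert. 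The role of the hypothesis $a\leq \diam_M(\partial M)$ is to keep the boundary Poincaré correction of the right order, while $L\leq 1$ plays the same role for the cylindrical correction.
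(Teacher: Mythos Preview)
Your approach is genuinely different from the paper's, and the core idea---estimating the energy along a Fermi tube around a geodesic joining two boundary components---is natural. However, there is a real gap at exactly the point you flag as the ``main obstacle''.

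The Poincar\'e-type inequality on the cylinders does \emph{not} have a universal constant. The sharp bound is
\[
\int_{\Sigma_i}(u-u_i)^2 \le \frac{a}{2\pi\tanh(2\pi L/a)}\int_{\Sigma_i\times[0,L)}|\nabla u|^2,
\]
and when $a\gg L$ the constant behaves like $a^2/(4\pi^2 L)$, not like a universal number. Tracing this through your absorption scheme, the error coming from the averaging step contributes a term of order $a^2/(L\,\diam_M(\partial M))$ to the coefficient of $\int_M|\nabla u|^2$; using only $a\le\diam_M(\partial M)$ this is bounded by $a/L$, which is not controlled by the hypotheses. Neither $L\le 1$ nor $a\le\diam_M(\partial M)$ rescues this, so the argument as written does not close. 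A symptom of the underlying difficulty: your tube meets each boundary circle only in an arc of length $\sim\inj_{\partial M}(M)\le L$, possibly a tiny fraction of the circle, and the oscillation of $u$ over the full circle is genuinely not controlled by the Dirichlet energy near that arc. A secondary point: with $K\ge\kappa$, Rauch comparison gives an \emph{upper} bound on the Fermi Jacobian, not the lower bound you state; fortunately it is the upper bound on the length of $c_t$ that your Cauchy--Schwarz step actually needs, so this is a misstatement rather than a fatal error.

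The paper sidesteps this issue entirely by not working function-by-function. It applies Theorem~\ref{thm0} (the Cheeger--Jammes inequality on a subdomain) to the domain $A$ built from short collars $\Sigma_i\times[0,\inj_{\partial M}(M))$ together with tubes of radius $\inj_{\partial M}(M)$ around geodesics joining $\Sigma_1$ to every other $\Sigma_i$. The Bishop--G\"unther tube volume bound gives $|A|\lesssim b\,a\,\inj_{\partial M}(M)+C(\kappa)\,b\,\diam_M(\partial M)\,\inj_{\partial M}(M)$, while any curve in $A$ separating two boundary components must cross one of the connecting geodesics and hence have length $\gtrsim\inj_{\partial M}(M)$. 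Plugging these into $\sigma_1\ge\min\{\length(A),L\}^2/(2(b-1)a|A|)$ yields the theorem directly; the hypothesis $a\le\diam_M(\partial M)$ is used only to merge the two terms in $|A|$, and no boundary Poincar\'e inequality is needed.
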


As for Theorem \ref{prop1}, we show that the exponent of the geometric quantities involved in Theorem \ref{borneinf2} cannot be improved (see Remark \ref{remopt}).
With the stronger assumption that the injectivity radius is bounded from below at each point of $M$ outside the cylindrical neighborhood of the boundary, Theorem \ref{borneinf2} can also be obtained from the combination of the lower bound given in \cite{PStek2018} for $\sigma_1$ of the Steklov problem on graphs and the discretization process described in \cite{colbois2016steklov}.

We note that results for surfaces with cylindrical boundary are significant since they can be used for deducing results for any manifolds with boundary by using quasi-isometries as it has been done in \cite{colbois2017compact} (see Theorem 1.1). Since surfaces that are conformal inside and isometric on the boundary are Steklov isospectral, the results also give lower bounds for surfaces that are conformal inside and isometric on the boundary to one with cylindrical boundary.

In a second part, we give an upper and lower estimate for the $b$ first Steklov eigenvalues of compact hyperbolic surfaces with $b$ geodesic boundary components. It shows that these eigenvalues are equivalent to the length of some separating curves of the manifold. The result is similar to a result of Schoen, Wolpert and Yau \cite{SWY} for Laplace eigenvalues. However, the family of curves that are relevant is different.

\begin{definition}
Let $M$ be a compact hyperbolic surface with $b\geq2$ geodesic boundary components. For $1\leq n\leq b-1$, we consider the family of curves which consist of a union of disjoint simple closed geodesics, not intersecting $\partial M$, and dividing $M$ into $n+1$ connected components, each containing at least one connected component of $\partial M$. We denote $C_n(M)$ the family of such curves. If $C_n(M)\not=\emptyset$, we define
\[
\length_n(M):=\min\{l(c): c\in C_n(M)\}
\]
where $l(c)$ is the length of the curve $c$.
\end{definition}
We have the following result.
\begin{theorem}
Let $M$ be a hyperbolic surface of genus $g$ with $b\geq 2$ geodesic boundary components, each of them having length $a\leq2\arcsinh(1)$. Assume that $g\not =0$ or $b>3$. There exists a constant $C_1$ depending only on $g$ and $b$ and a universal constant $C_2$ such that for $1\leq n<\lceil \frac{b}{2}\rceil$ we have
\[
C_1\length_n^2\leq\sigma_n\leq C_2 \frac{\length_n}{a}.
\]
The inequality is also true for $\lceil \frac{b}{2}\rceil\leq n <b$ if $C_n(M)\not=\emptyset$ and  there exists $c\in C_n(M)$ such that each simple closed geodesic of $c$ is of length $l\leq L_{g+b}$, where $L_{g+b}=4(3(g+b)-3)\log(\frac{8\pi(g+b-1)}{3(g+b)-3})$.
\label{thmhyp}
\end{theorem}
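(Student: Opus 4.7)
The plan is to prove the upper and lower bounds separately.

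For the upper bound $\sigma_n\leq C_2\length_n/a$, I will apply the min-max variational principle with explicit test functions adapted to a minimizing curve $c\in C_n(M)$ realizing $\length_n$. Cutting $M$ along $c=\gamma_1\cup\cdots\cup\gamma_m$ produces $n+1$ connected components $\Omega_0,\ldots,\Omega_n$, each containing at least one boundary component. The hyperbolic collar lemma provides around each $\gamma_j$ a collar parametrized by Fermi coordinates with metric $d\rho^2+\cosh^2(\rho)\,d\theta^2$, $\theta\in[0,l_j)$. I define $f_i$ to equal $1$ on $\Omega_i$ outside a strip of half-width $w'=\arcsinh(1)$ inside each bordering collar and to interpolate linearly in $\rho$ across the strip down to $0$ on the other side. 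Since the traces $f_i|_{\partial M}$ have pairwise disjoint supports, the test functions are orthogonal in $L^2(\partial M)$ and min-max yields $\sigma_n\leq\max_i R(f_i)$. A direct computation in collar coordinates gives $\int_M|\nabla f_i|^2\leq (\sinh(w')/w'^2)\length_n$ and $\int_{\partial M}f_i^2\geq a$. Two regimes must be handled: when $\length_n$ is at most a universal constant, all $l_j\leq 2\arcsinh(1)$ so the strips fit inside the collars and the construction gives the desired bound directly; when $\length_n$ is larger, one invokes an a priori bound $\sigma_n\leq C/a$ valid on hyperbolic surfaces with short boundary. The hypothesis $a\leq2\arcsinh(1)$, together with the Bers-type length bound $l\leq L_{g+b}$ when $n\geq\lceil b/2\rceil$, secures the small-$\length_n$ regime.

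For the lower bound $\sigma_n\geq C_1\length_n^2$, I will extend Theorem~\ref{prop1} to the higher-eigenvalue and hyperbolic setting. A hyperbolic collar of half-width $\arcsinh(1)$ around each boundary component (available because $a\leq2\arcsinh(1)$) is quasi-isometric to a Euclidean cylinder with a universal distortion constant, so the proof of Theorem~\ref{prop1} carries over after a controlled multiplicative adjustment. To pass from $\sigma_1$ to $\sigma_n$, I will use a domain-decomposition argument: the first $n+1$ Steklov eigenfunctions give rise, via their joint nodal/level structure, to a partition $M=\Omega_0\cup\cdots\cup\Omega_n$, and applying the hyperbolic analog of Theorem~\ref{prop1} to each $\Omega_i$ (with a mixed Steklov--Dirichlet boundary formulation on the interfaces) combined with the Gauss--Bonnet identity $|M|=2\pi(2g-2+b)$ yields a bound of the form $\sigma_n\geq C_1(g,b)\length_n^2$. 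The hypothesis $a\leq 2\arcsinh(1)$ absorbs the factor $a^{-1}$ from Theorem~\ref{prop1} into a universal constant.

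The main obstacle lies in the lower bound, and more specifically in the step showing that the partition $\Omega_0\cup\cdots\cup\Omega_n$ produced by the eigenfunctions is actually associated to a curve in $C_n(M)$---that is, that the boundary $\partial M$ is distributed into exactly $n+1$ nonempty groups and that the interfaces between pieces can be arranged to be unions of simple closed geodesics. Orthogonality in $L^2(\partial M)$ of the first $n+1$ Steklov eigenfunctions forces sign changes between boundary components but does not automatically yield a topological partition with the required combinatorics, so a Courant-type argument combined with a curve-shortening step (replacing interfaces by geodesics in their free homotopy class) will be needed. This step explains the split in the hypotheses: for $n<\lceil b/2\rceil$ the balance of boundary components among the candidate pieces makes admissible partitions easy to produce, whereas for $n\geq\lceil b/2\rceil$ the Bers-length bound $l\leq L_{g+b}$ is needed to guarantee that the relevant short separating geodesics exist in the first place via a compatible pants decomposition.
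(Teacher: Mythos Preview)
Your upper bound is essentially the paper's approach (collar cutoffs adapted to a minimizing $c\in C_n(M)$, together with the a~priori bound of Lemma~\ref{compSNhyp} when $\length_n$ is large), and would go through with minor adjustments.

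Your lower-bound strategy, however, has a genuine gap, and the obstacle you flag is not a technicality but the heart of the matter. Trying to manufacture a partition from the first $n+1$ eigenfunctions and then relate its interfaces to a curve in $C_n(M)$ runs into two real problems: there is no Courant-type result producing exactly $n+1$ pieces each meeting $\partial M$ from the ``joint nodal/level structure'' of several eigenfunctions, and even granting such a partition, curve-shortening the interfaces need not yield disjoint simple closed geodesics separating the boundary components as required by the definition of $C_n(M)$. Moreover, imposing Steklov--Dirichlet conditions on the interfaces does not assemble into a lower bound for $\sigma_n(M)$: the bracketing~\eqref{compvp} goes the wrong way, and a Theorem~\ref{prop1}-type estimate controls $\sigma_1$ of a piece, not $\sigma_0^D$. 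The paper proceeds in the opposite direction: it \emph{starts} from a minimizing $c=\gamma_1\cup\cdots\cup\gamma_p\in C_n(M)$, deletes the longest component $\gamma_{\max}$ to merge two adjacent pieces and obtain exactly $n$ regions $\Omega_1,\dots,\Omega_n$, and then uses \emph{Neumann} bracketing, $\sigma_n(M)\geq\sigma_n^N(\bigcup_i\Omega_i)=\min_i\sigma_1^N(\Omega_i)$. Each $\sigma_1^N(\Omega_i)$ is bounded below via the Cheeger--Jammes inequality (Proposition~\ref{propCheeg1} and Remark~\ref{remMixed}) through a five-case estimate of $h_1(\Omega_i)$ and $h_2(\Omega_i)$; the minimality of $c$ is precisely what forces any curve in $\Omega_i$ separating boundary components to have length $\gtrsim\length_n$, and this is where $\length_n$ enters the lower bound. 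The split at $n=\lceil b/2\rceil$ is used only in a preliminary step (via Corollary~\ref{bersbord}) to ensure $C_n(M)\neq\emptyset$ with geodesics of length $\leq L_{g+b}$, so that $\length_n\leq\beta_1(g,b)$; it plays no role in the lower-bound mechanism itself.
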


If $a$ becomes small, we see that the upper bound becomes big, but we are also able to show that for $0\leq n<b$, $\sigma_n$  is bounded above by $\frac{1}{\arctan(\frac{1}{\sinh{\frac{a}{2}}})}\leq\frac{2}{\pi}$.

\begin{remark}
It is possible to obtain results similar to Theorems  \ref{prop1}, \ref{borneinf2} and  \ref{thmhyp} without the assumption that all the boundary components have the same length. In this case, we have to replace $a$ by the maximum length of the boundary components in Theorems \ref{prop1} and \ref{borneinf2}. In Theorem \ref{thmhyp}, we have to replace $a$ in the upper bound by the minimum length of the boundary components and make the assumption that the maximum length of the boundary components is $\leq 2 \arcsinh(1)$. The results are obtained by slightly modifying the proofs given in Section \ref{sec3}.
\end{remark}

An important tool for obtaining our results is estimating isoperimetric constants in an improved statement of a lower bound given by Jammes for the first non-zero Steklov eigenvalue. The strategy of estimating isoperimetric constants has been used in the past for obtaining lower bounds for the first non-zero Laplace eigenvalue on closed surfaces (see e.g. \cite{Buser77} and \cite{SWY}). We also use comparisons with mixed problems.

The article is structured as follows. In Section \ref{chap2} we introduce mixed problems and Cheeger-type estimates for Steklov eigenvalues. The main results are proved in Section \ref{sec3}, which is divided in two parts. In the first part, we prove a generalization of Theorem \ref{prop1} and then use it to prove Theorem \ref{borneinf2}. In the second part, we recall some useful properties of hyperbolic surfaces and prove Theorem \ref{thmhyp}.

\end{section}

\begin{section}{Cheeger-type estimates and mixed problems}
\label{chap2}
\begin{subsection}{Steklov eigenvalues}
Let $(M,g)$ be a compact connected Riemannian manifold with Lipschitz boundary $\partial M$. The Steklov problem on $M$ is the eigenvalue problem
\begin{equation*}
\begin{cases}
\triangle u=0\\
\frac{\partial u}{\partial \nu}=\sigma u
\end{cases}
\label{Stekprob}
\end{equation*}
where $\sigma$ is the spectral parameter.
The Stekov eigenvalues form a sequence $0=\sigma_0<\sigma_1\leq\sigma_2\leq\dots\nearrow$. 
They can be characterized variationally as follows:
\begin{equation*}
\sigma_k(M)=\min_{E\in V_k}\max_{0\not=u\in E} R(u),
\end{equation*}
where $V_k$ is the set of all $k+1$ dimensional subspaces of the Sobolev space $H^1(M)$, and $R(u)$ is the Rayleigh quotient associated to the Steklov problem,
\begin{equation*}
R(u)=\frac{\int_{M} |\nabla u|^2dv_g}{\int_{\partial M} u^2 dS_g}.
\end{equation*}

There is a connection between Steklov eigenvalues of a Riemannian manifold $(M,g)$ with boundary $\partial M$ and eigenvalues of mixed problems on a Lipschitz open subset $A\subset M$ containing $\partial M$. 
Given a Lipschitz open subset $A\subset M$ such that $\partial M\subset A$, we denote $\partial A$ the topological boundary of $A$ as a subset of $M$. The mixed Steklov-Neumann problem on $A$ is
\[
\begin{cases}
\triangle u=0 &\text{ in }A,\\
\frac{\partial u}{\partial \nu}=\sigma u &\text{ on }A\cap \partial M,\\
\frac{\partial u}{\partial \nu}=0 &\text{ on }\partial A,
\end{cases}
\]
and the mixed Steklov-Dirichlet problem on $A$ is
\[
\begin{cases}
\triangle u=0 &\text{ in }A,\\
\frac{\partial u}{\partial \nu}=\sigma u &\text{ on }A\cap \partial M,\\
u=0 &\text{ on }\partial A.\\
\end{cases}
\]
The eigenvalues of the mixed Steklov-Neumann problem form a discrete sequence $0=\sigma_0^N(A)\leq\sigma_1^N(A)\leq \sigma_2^N(A)\leq \dots\nearrow$ and the eigenvalues of the mixed Steklov-Dirichlet problem form a discrete sequence $0<\sigma_0^D(A)\leq\sigma_1^D(A)\leq\sigma_2^D(A)\leq\dots\nearrow$.

The eigenvalues satisfy
\begin{equation}
\sigma_k^N(A)\leq \sigma_k(M)\leq\sigma_k^D(A).
\label{compvp}
\end{equation}
The proof of this inequality follows from a comparison between the Rayleigh quotients of these problems, see \cite{CGG} for more details.
\end{subsection}

\begin{subsection}{Cheeger-type estimates}
In 1969, J. Cheeger \cite{Cheeger1970} gave a lower bound in term of an isoperimetric constant for the first non-zero Laplace eigenvalue of a compact Riemannian manifold.  A similar estimate for the first non-zero Steklov eigenvalue was shown by P. Jammes in 2015 \cite{JammesStek}. We give an improvement of this result that we use to obtain the explicit lower bounds presented in this article.

\begin{definition}
\begin{enumerate}
We define the following geometric constants:
\item
\[
h_1(M):=\inf_{|D|\leq\frac{|M|}{2}}\frac{|\partial D|}{|D|},
\]
\item
\[
h_2(M):=\inf_{|D|\leq\frac{|M|}{2}}\frac{|\partial D|}{|D\cap \partial M|},
\]
where in both cases, $D$ is taken among the domains of $M$ satisfying $D\cap \partial M \not =\emptyset$, and such that $M\setminus D$ is also connected and intersects $\partial M$.
\end{enumerate}
\label{defcst1}
\end{definition}

\begin{remark} The set $\partial D$ is the topological boundary of the open subset $D$ of the manifold with boundary $M$; this set does not contain $D\cap\partial M$.
\end{remark}
\begin{remark}
Jammes defines  two constants in a similar way but the domain $D$ is only required to satisfy $|D|\leq\frac{|M|}{2}$.
\end{remark}

\begin{prop} Let $(M,g)$ be a compact Riemannian manifold with boundary $\partial M$. We have
\begin{equation*}
\sigma_1(M)\geq\frac{ h_1(M)\cdot h_2(M)}{4}.
\label{Cheeger1}
\end{equation*}
\label{propCheeg1}
\end{prop}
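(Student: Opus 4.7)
The plan is to run the classical Cheeger-type argument combining Cauchy--Schwarz with the coarea formula, using a first eigenfunction of the Steklov problem as a test function. Let $u$ be an eigenfunction associated with $\sigma_1(M)$ normalized so that $\int_{\partial M} u\, dS_g = 0$. Since $u$ must change sign, after possibly replacing $u$ by $-u$ I may assume that $|\{u>0\}| \leq |M|/2$. Set $v := u^+ = \max(u,0)$. Integrating by parts on $\{u>0\}$ and using that $u$ vanishes on the nodal set, that $\Delta u = 0$ in $M$, and the Steklov boundary condition, one obtains $\int_M |\nabla v|^2\, dv_g = \sigma_1 \int_{\partial M} v^2\, dS_g$, so the test function $v$ realizes the Rayleigh quotient $\sigma_1$.

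The core computation combines two inequalities. By Cauchy--Schwarz,
\[
\Bigl(\int_M |\nabla v^2|\, dv_g\Bigr)^2 = \Bigl(2\int_M v|\nabla v|\, dv_g\Bigr)^2 \leq 4 \int_M v^2\, dv_g \cdot \int_M |\nabla v|^2\, dv_g.
\]
By the coarea formula applied to $v^2$ and the substitution $s=\sqrt{t}$,
\[
\int_M |\nabla v^2|\, dv_g = \int_0^{\infty} 2s\, |\partial D_s|\, ds,
\]
where $D_s := \{v > s\}$ and $|\partial D_s|$ denotes the length of the topological boundary of $D_s$ lying in the interior of $M$. If, for a.e.\ $s$, the set $D_s$ belongs to the admissible family defining $h_1(M)$ and $h_2(M)$, then $|\partial D_s| \geq h_1(M)|D_s|$ and $|\partial D_s| \geq h_2(M)|D_s \cap \partial M|$; integrating over $s$ via Cavalieri then yields $\int_M |\nabla v^2| \geq h_1(M)\int_M v^2$ and $\int_M |\nabla v^2| \geq h_2(M)\int_{\partial M} v^2$. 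Multiplying these together and substituting into the Cauchy--Schwarz bound gives
\[
\int_M |\nabla v|^2\, dv_g \geq \frac{h_1(M)\, h_2(M)}{4} \int_{\partial M} v^2\, dS_g,
\]
which, combined with the identity from the first paragraph, yields $\sigma_1(M) \geq h_1(M) h_2(M)/4$.

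The main obstacle, and the precise point where the improvement over Jammes' version enters, is the verification that the super-level sets $D_s$ lie in the more restrictive admissible class of Definition \ref{defcst1}. The bound $|D_s| \leq |M|/2$ is immediate from the sign choice, and $D_s \cap \partial M \neq \emptyset$ holds on the relevant range of $s$; the condition that $M\setminus D_s$ meets $\partial M$ follows because $\{u<0\}\cap \partial M \neq \emptyset$, which is forced by the orthogonality condition. What remains is the connectedness of $M\setminus D_s$. My plan is to enlarge $D_s$ to $\widetilde D_s$ by adjoining every connected component of $M\setminus D_s$ which does not meet $\partial M$; this operation only decreases $|\partial D_s|$ while keeping $|D_s\cap \partial M|$ fixed and possibly enlarging $|D_s|$, so the isoperimetric inequalities applied to an admissible $\widetilde D_s$ (or to its complement when $|\widetilde D_s| > |M|/2$) transfer to $D_s$ in the correct direction. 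A topological reduction then shows that the remaining components of $M\setminus \widetilde D_s$ collapse to a single connected piece, exploiting the fact that $v = u^+$ is harmonic on its support so that, by the maximum principle, every connected component of $D_s$ meets $\partial M$ for $s>0$; this controls the topology of the complement and is what rules out interior ``bubbles'' that would break admissibility. Carrying out this verification in full generality is the hard part of the argument; the rest is a straightforward integration.
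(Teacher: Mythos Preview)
Your approach is the paper's approach: run the Jammes/Cheeger computation (Cauchy--Schwarz plus coarea) on the super-level sets of a first Steklov eigenfunction, and use the maximum principle to gain the extra admissibility conditions. The paper is equally terse on the analytic side and simply refers to Jammes for the inequality
\[
\sigma_1(M)\geq\frac{1}{4}\min_{t\geq 0}\frac{|\partial D(t)|}{|D(t)|}\cdot\min_{t\geq0}\frac{|\partial D(t)|}{|D(t)\cap \partial M|}.
\]

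The gap is in your last paragraph. By the maximum and minimum principles for the harmonic function $u$, every component of $D_s$ \emph{and} every component of $M\setminus D_s$ already meets $\partial M$; hence your enlargement $\widetilde D_s$ is vacuous. But it does \emph{not} follow that $M\setminus D_s$ is connected: for a generic level $s$ the complement can split into several pieces each touching $\partial M$ (picture $M$ a pair of pants and $D_s$ a collar of one cuff). So the claim that the remaining components ``collapse to a single connected piece'' is false as stated, and likewise you have not addressed connectedness of $D_s$ itself.

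The paper resolves this differently: it does not argue that the level sets themselves lie in the restricted class, but rather invokes a reduction due to Yau to the effect that the infimum of $|\partial D|/|D|$ (resp.\ $|\partial D|/|D\cap\partial M|$) over domains $D$ with every component of $D$ and of $M\setminus D$ meeting $\partial M$ is the same as the infimum over the smaller class where $D$ and $M\setminus D$ are each connected. That is a component-selection argument (pass to a suitable component or absorb components into $D$), not a statement about individual level sets. Replacing your ``collapse'' step by this reduction completes the proof along exactly the lines the paper uses.
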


This is the result of Jammes but with slightly modified constants. It is obtained by modifying the conclusion of Jammes's proof. Example \ref{exSurfrev} below shows that in dimension $2$ this inequality is stronger than the one given by Jammes where $D$ is only required to satisfy $|D|\leq\frac{|M|}{2}$ in the isoperimetric constants.  Another situation where the constants $h_1$ and $h_2$ will not go to zero while the constants of Jammes do is Example 4.5 of \cite{CGGS}.

\begin{exe} Let $C$ be a 2-dimensional right cylinder in $\mathbb{R}^3$ whose base contains a line segment. We consider the surfaces obtained by gluing a surface of revolution containing a thin collapsing cylinder on the middle of the flat part of $C$, as shown in Figure \ref{ex1}.
These surfaces are all Steklov isospectral to $C$ (see \cite{CGG}, Appendix A, and \cite{brissonMT} for more details). However, Jammes's constants tend to zero as the thin passage collapses. In contrast, the constants $h_1$ and $h_2$ that we use remain bounded (see Lemma \ref{lemma1}).
\label{exSurfrev}

\begin{figure}[H]
\includegraphics[scale=0.25]{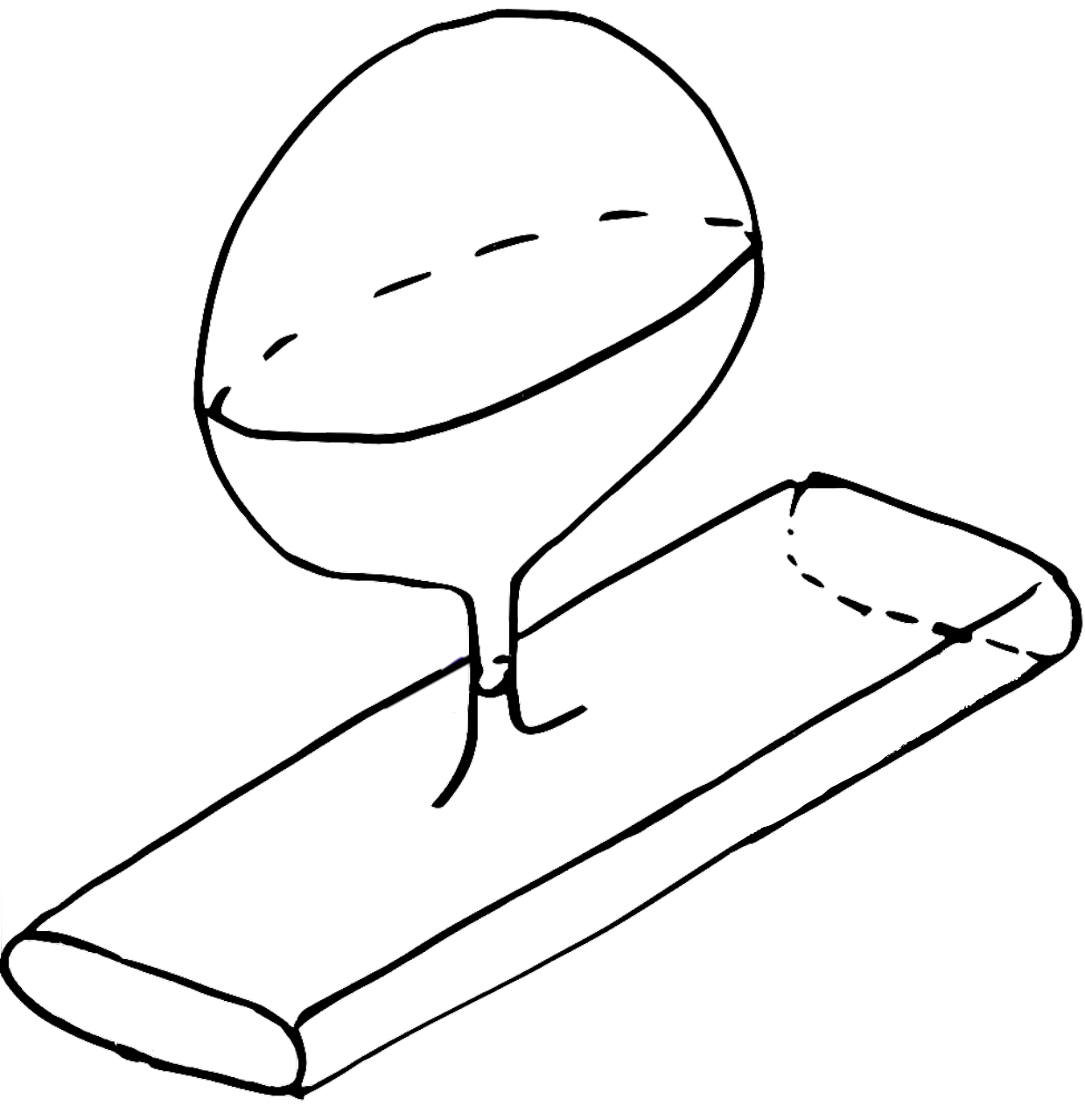}
\centering
\caption{A cylinder on which we have glued a surface of revolution}
\label{ex1}
\end{figure}
\end{exe}

\begin{proof}[Proof of Proposition \ref{propCheeg1}]
Let $u$ be an eigenfunction associated to the first non-zero Steklov eigenvalue on $M$. We define
\[
D(t):=\{x\in M, u(x)>t\}.
\]
Without loss of generality, we can assume $|D(t)|\leq\frac{|M|}{2}$ for all $t\geq 0$. From the proof of Jammes's result, which is similar to the classical proof of Cheeger, we have
\[
\sigma_1(M)\geq\frac{1}{4}\min_{t\geq 0}\frac{|\partial D(t)|}{|D(t)|}\cdot\min_{t\geq0}\frac{|\partial D(t)|}{|D(t)\cap \partial M|}.
\]
Since $u$ is harmonic and not constant, it follows from the maximum principle that  each connected component of $M\setminus \{u^{-1}(t)\}$ intersects $\partial M$. Therefore, the inequalities $\min_{t\geq 0}\frac{|\partial D(t)|}{|D(t)|}\geq\inf_{|D|\leq\frac{|M|}{2}}\frac{|\partial D|}{|D|}$ and $\min_{t\geq 0}\frac{|\partial D(t)|}{|D(t)\cap \partial M|}\geq \inf_{|D|\leq\frac{|M|}{2}}\frac{|\partial D|}{|D\cap \partial M|}$ are true if the infima are taken among all sets $D$ such that each connected component of $D$ and of $M\setminus D$ intersects $\partial M$. Finally, as observed by S.-T. Yau in \cite{YauIsopcon}, we can assume that both $D$ and $M\setminus D$ are connected.
\end{proof}

\begin{remark} It has been showed (see e.g. \cite{BuserHawaii}, Theorem 1.14) that the lower bound of Cheeger for the first non-zero Laplace eigenvalue is sharp. It would be interesting to know if the lower bound of Jammes is sharp too.
\end{remark}

\begin{remark}
Given a domain $A$ in $M$ such that $A\cap \partial M\not =\emptyset$, we define the constants $h_1(A)$ and $h_2(A)$ in the same way as for $M$, by replacing $M$ by $A$ and $\partial M$ by $\partial M\cap A$ in the conditions that $D$ has to satisfy. The same proof as for Proposition \ref{propCheeg1} shows that $\sigma_1^N(A)\geq\frac{ h_1(A)\cdot h_2(A)}{4}$.
\label{remMixed}
\end{remark}

In the construction described in Example \ref{exSurfrev}, if we glue two surfaces of revolution of equal volume on $C$ instead of one and let grow the volume of these surfaces of revolution, we see that $h_1$ tends to zero by choosing a domain that contains one of the two surfaces of revolution. This shows that in this case, the estimate of Proposition $2$ is not equivalent to the first non-zero Steklov eigenvalue, which is constant since the surfaces obtained are isospectral to $C$.

The following proposition shows a way of improving Proposition \ref{propCheeg1}.

\begin{prop}
Let $(M,g)$ be a compact Riemannian manifold with boundary $\partial M$. For any domain $A$  in $M$ such that $\partial M\subset A$, we have
\[
\sigma_1(M)\geq\frac{ h_1(A)\cdot h_2(A)}{4}.
\]
\label{propCheeg2}
\end{prop}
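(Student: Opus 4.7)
The proposition is a direct corollary of two facts already proved in this section. First, inequality (\ref{compvp}) applied with $k=1$ gives $\sigma_1(M)\geq\sigma_1^N(A)$ for any Lipschitz domain $A\subset M$ with $\partial M\subset A$. Second, Remark \ref{remMixed} asserts the Cheeger-type estimate $\sigma_1^N(A)\geq\frac{h_1(A)\cdot h_2(A)}{4}$ for such $A$. The plan is simply to chain these two inequalities.

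Since $\partial M\subset A$ by hypothesis, the intersection $A\cap\partial M=\partial M$ is nonempty, so the isoperimetric constants $h_1(A)$ and $h_2(A)$ are well defined and Remark \ref{remMixed} applies. Combining this with (\ref{compvp}) yields the desired bound $\sigma_1(M)\geq\sigma_1^N(A)\geq\frac{h_1(A)\cdot h_2(A)}{4}$, with no further calculation.

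The only step requiring any real work is to be convinced that Remark \ref{remMixed} is valid, i.e.\ that the proof of Proposition \ref{propCheeg1} carries over verbatim to the mixed Steklov-Neumann problem on $A$. This is where any difficulty sits: for a nonconstant mixed eigenfunction $u$ on $A$, one must check that each connected component of $A\setminus u^{-1}(t)$ meets $A\cap\partial M$, so that the minima over level sets $D(t)=\{u>t\}$ can indeed be compared with the infima defining $h_1(A)$ and $h_2(A)$. A component on which $u$ avoided $A\cap\partial M$ would have its boundary contained in $u^{-1}(t)\cup\partial A$, forcing $u$ to attain its extremum on the Neumann part $\partial A$; by the Hopf lemma this would give $\partial u/\partial\nu\neq 0$ there, contradicting the Neumann condition (unless $u$ is constant on that component, which is incompatible with the strong maximum principle applied at the level $t$). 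Once this point is settled, Proposition \ref{propCheeg2} follows in one line from the chain of inequalities above.
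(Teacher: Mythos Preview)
Your proof is correct and follows exactly the same route as the paper: chain the comparison inequality (\ref{compvp}) with Remark \ref{remMixed}. The paper's own proof is a single sentence invoking these two ingredients. Your additional paragraph verifying that the maximum-principle argument behind Proposition \ref{propCheeg1} survives in the mixed Steklov--Neumann setting (so that Remark \ref{remMixed} is justified) is a welcome elaboration that the paper leaves implicit.
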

\begin{proof}
The proof follows from the comparison (\ref{compvp}) between Steklov and mixed Steklov-Neumann eigenvalues and Remark \ref{remMixed}.
\end{proof}
This estimate is interesting if we can find domains such that $h_1$ and $h_2$ are bounded below. Finally, having in mind Question 4.6 of \cite{CGGS}, we remark that by taking the supremum over the domains $A$, a new constant is defined. It is more acurate than the product $h_1(M)\cdot h_2(M)$ but difficult to calculate.
\end{subsection}
\end{section}

\begin{section}{Explicit estimates for Steklov eigenvalues}
\label{sec3}
\begin{subsection}{Lower bounds for $\sigma_1$ of surfaces with several cylindrical boundary components}

We recall the following estimate for Steklov eigenvalues of surfaces with cylindrical boundary components, which follows directly from the comparison (\ref{compvp}) with eigenvalues of mixed Steklov-Neumann and Steklov-Dirichlet problems on the union of the cylindrical boundary nieghborhoods, and the explicit calculation of these.
\begin{lemma} Let $(M,g)$ be a compact $2$-dimensional Riemannian manifold with $b\geq 1$ boundary components having length $a$. Assume that the boundary $\partial M=\Sigma_1\cup\dots\cup\Sigma_b$ has a neighborhood $V(\partial M)$ which is isometric to the union of disjoint right cylinders $\cup_{i=1}^{b}\Sigma_i\times [0,L)$. The Steklov eigenvalues $\sigma_k$ of $M$ satisfy
\begin{align*}
0\leq\sigma_k\leq \frac{1}{L}
\end{align*}
if $k<b$, and
\begin{align*}
\frac{2\pi j}{a}\tanh(\frac{2\pi j}{a}L)\leq\sigma_k\leq \frac{2\pi j}{a}\coth(\frac{2\pi j}{a}L)
\end{align*}
if $(2j-1)b\leq k< (2j+1)b$, where $j\in\mathbb{N}^*$.
\label{compcyl}
\end{lemma}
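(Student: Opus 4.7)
I would prove this lemma as a direct corollary of the two-sided bracketing (\ref{compvp}) applied with $A = V(\partial M)$, reducing the problem to an explicit eigenvalue computation on a disjoint union of right cylinders. The key point is that $A$ is a disjoint union of $b$ isometric cylinders $\Sigma_i \times [0,L)$, so the mixed Steklov--Neumann (resp.\ Steklov--Dirichlet) spectrum of $A$ is obtained by taking the spectrum of the mixed problem on one cylinder $S^1_a \times [0,L)$ and multiplying each multiplicity by $b$. Thus everything reduces to computing the spectrum on a single cylinder.

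On $S^1_a \times [0,L)$ I would separate variables, writing eigenfunctions in the Fourier basis $\{e^{2\pi i j x/a}\}_{j\in\mathbb{Z}}$ on the circle of length $a$. A harmonic function has the form $u(x,t) = \sum_{j} \phi_j(x)\psi_j(t)$, and $\Delta u = 0$ forces $\psi_j''(t) = \mu_j^2 \psi_j(t)$ with $\mu_j = 2\pi|j|/a$. For each Fourier mode I would solve the ODE with the Steklov condition at $t=0$ (the outward normal is $-\partial_t$, giving $\psi_j'(0) = -\sigma\psi_j(0)$) together with either the Neumann condition $\psi_j'(L)=0$ or the Dirichlet condition $\psi_j(L)=0$. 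For the constant mode $j=0$, $\psi_0(t) = A+Bt$, giving $\sigma=0$ in the Neumann case and $\sigma=1/L$ in the Dirichlet case, each with multiplicity $1$ per cylinder. For $j\neq 0$, writing $\psi_j(t) = A\cosh(\mu_j t) + B\sinh(\mu_j t)$ and imposing both boundary conditions leads to $\sigma = \mu_j \tanh(\mu_j L)$ in the Neumann case and $\sigma = \mu_j \coth(\mu_j L)$ in the Dirichlet case, each with multiplicity $2$ per cylinder (the $\pm j$ modes).

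Multiplying multiplicities by $b$, I read off that $\sigma_k^N(A) = 0$ for $0\leq k < b$ and $\sigma_k^N(A) = \frac{2\pi j}{a}\tanh\bigl(\frac{2\pi j}{a}L\bigr)$ for $(2j-1)b \leq k < (2j+1)b$, and analogously for $\sigma_k^D(A)$ with $\tanh$ replaced by $\coth$ (the constant mode now contributing $1/L$ instead of $0$). The only tiny verification needed for the ordering is that $x \mapsto x\tanh(xL)$ and $x\mapsto x\coth(xL)$ are increasing on $(0,\infty)$, and that $1/L \leq \frac{2\pi}{a}\coth(\frac{2\pi}{a}L)$, which follows from $\coth(y)\geq 1/y$. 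Plugging these explicit values into $\sigma_k^N(A)\leq\sigma_k(M)\leq\sigma_k^D(A)$ gives the two cases stated in the lemma.

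The proof is essentially a separation-of-variables computation, so there is no real obstacle; the only point of care is bookkeeping the multiplicity-$2b$ eigenspaces correctly so that the index ranges $(2j-1)b\leq k<(2j+1)b$ come out right.
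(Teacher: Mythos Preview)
Your proposal is correct and follows exactly the approach the paper itself indicates: the paper states that the lemma ``follows directly from the comparison (\ref{compvp}) with eigenvalues of mixed Steklov-Neumann and Steklov-Dirichlet problems on the union of the cylindrical boundary neighborhoods, and the explicit calculation of these,'' without giving further details. Your separation-of-variables computation on each cylinder and the multiplicity bookkeeping fill in precisely those details, and the auxiliary monotonicity checks you mention are what is needed to justify the index ranges.
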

We see that if $b=1$, $\sigma_1$ is bounded below by $\frac{2\pi}{a}\tanh(\frac{2\pi}{a}L)$, but if $b>1$ this lemma does not give a lower bound for $\sigma_1$. Therefore, our results concern only the case $b\geq2$ which is interesting.

Theorem \ref{prop1} is in fact a particular case of a more general result (Theorem \ref{thm0} below) that involves domains of $M$ containing the cylindrical neighborhood of $\partial M=\Sigma_1\cup\dots\cup\Sigma_b$, which is in this result assumed to be isometric to the union of disjoint right cylinders $\cup_{i=1}^{b}\Sigma_i\times [0,L)$. We note that given such a domain $A$, we can define $\length(A)$ in the same way as we have defined $\length(M)$ in Definition \ref{def1} by considering curves that divide $A$ into two connected components without intersecting $\partial M$. The reason for proving this result instead of Theorem 1 is that it is needed in the proof of Theorem \ref{borneinf2}.

\begin{theorem}
Let $(M,g)$ be a compact connected $2$-dimensional Riemannian manifold with a boundary having $b\geq 2$ components of length $a$. Assume that the boundary $\partial M=\Sigma_1\cup\dots\cup\Sigma_b$ has a neighborhood $V(\partial M)$ which is isometric to the union of disjoint right cylinders $\cup_{i=1}^{b}\Sigma_i\times [0,L)$. For any domain $A$ in $M$ such that $V(\partial M)\subset A$ (possibly $A=M$), we have
\begin{align*}
\sigma_1(M)\geq \frac{\min\{\length(A),L\}^2}{2(b-1)a|A|}.
\end{align*}
\label{thm0}
\end{theorem}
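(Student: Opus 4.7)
The plan is to apply Proposition \ref{propCheeg2} to the domain $A$, giving $\sigma_1(M)\geq h_1(A)h_2(A)/4$, and then to prove separately
\[
h_1(A)\geq\frac{2\min\{\length(A),L\}}{|A|}\quad\text{and}\quad h_2(A)\geq\frac{\min\{\length(A),L\}}{(b-1)a}.
\]
The product divided by $4$ is exactly the right-hand side of the theorem. Both bounds reduce, via the trivial estimates $|D|\leq|A|/2$ and $|D\cap\partial M|\leq(b-1)a$, to the single geometric claim
\[
|\partial D|\geq\min\{\length(A),L\}\quad\text{for every admissible }D\subset A.
\]
The bound $|D\cap\partial M|\leq(b-1)a$ needs its own short argument: when it fails, $D$ must meet every boundary component non-trivially, and this forces extra cylindrical arcs into $\partial D$ that compensate in the ratio $|\partial D|/|D\cap\partial M|$.

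To establish the geometric claim I would split into two cases. If some connected component of $\partial D$ contains a subarc joining $\Sigma_i\times\{0\}$ to $\Sigma_i\times\{L\}$ inside one of the cylindrical collars $\Sigma_i\times[0,L]$, then projecting this subarc to $[0,L]$ via the $1$-Lipschitz map $(x,t)\mapsto t$ shows it has length at least $L$, so $|\partial D|\geq L\geq\min\{\length(A),L\}$. Otherwise no such traversing subarc exists, and I retract the portions of $\partial D$ lying in each collar onto the inner circle $\Sigma_i\times\{L\}$ via the $1$-Lipschitz map $(x,t)\mapsto(x,L)$. The resulting curve $c'$ lies in $A\setminus\partial M$ and satisfies $|c'|\leq|\partial D|$. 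Using the assumption that no arc of $\partial D$ crosses a collar from outer to inner edge, the separating role of $\partial D$ is inherited by $c'$: $A\setminus c'$ still splits into two connected pieces, one containing the boundary components of $M$ assigned to $D$ and the other those assigned to $A\setminus D$. Hence $c'\in C(A)$ and $|\partial D|\geq|c'|\geq\length(A)$.

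The main technical obstacle is verifying that the retracted curve $c'$ actually yields a valid element of $C(A)$. The delicate point concerns arcs of $\partial D$ whose two endpoints both lie on the same outer circle $\Sigma_i\times\{0\}$: such an arc subdivides $\Sigma_i\times\{0\}$ into subarcs alternately inside $D$ and inside $A\setminus D$, and one must check that after retracting onto $\Sigma_i\times\{L\}$ the connected components of $A\setminus c'$ inherit a consistent allocation of boundary components. A similar care is needed for the auxiliary argument yielding $|D\cap\partial M|\leq(b-1)a$ in the regime where $D$ meets every boundary component; together these two bookkeeping arguments are where the proof does its real work.
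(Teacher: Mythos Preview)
Your overall plan---apply Proposition~\ref{propCheeg2} and then bound $h_1(A)$ and $h_2(A)$ separately---matches the paper exactly; this is precisely how Lemma~\ref{lemma1} is used. The gap is in the geometric claim you isolate: the assertion $|\partial D|\geq\min\{\length(A),L\}$ for every admissible $D$ is \emph{false}. Take $D$ to be a half-disk of radius $r\ll L$ inside a collar $\Sigma_i\times[0,L)$, centred on a point of $\Sigma_i\times\{0\}$. This $D$ is admissible (connected, meets $\partial M$, $A\setminus D$ connected and meets $\partial M$, $|D|$ small), yet $|\partial D|=\pi r$ can be made arbitrarily small. In your dichotomy this falls into the second case (no arc of $\partial D$ traverses the collar), and the retracted curve $c'$ is a short arc on $\Sigma_i\times\{L\}$ which separates nothing---indeed $D$ contains no full boundary component of $M$, so there is no ``allocation of boundary components'' for $c'$ to inherit, and $c'\notin C(A)$. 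This is exactly the configuration you flag as delicate, but it is not a bookkeeping issue: the uniform lower bound on $|\partial D|$ that you are trying to establish simply does not hold.

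The paper avoids this by treating the ratios $|\partial D|/|D|$ and $|\partial D|/|D\cap\partial M|$ directly rather than bounding numerator and denominator separately. When $\partial D$ meets $\Sigma_i$ and $D$ stays inside the flat collar, it uses the Euclidean isoperimetric inequality to get $|\partial D|/|D|\geq 2\pi/|\partial D|$, which is large precisely when $|\partial D|$ is small; for $h_2$ in the same case, it uses that $\partial D$ is homotopic (rel endpoints) to the geodesic arc $D\cap\Sigma_i$ in the flat collar, giving $|\partial D|\geq|D\cap\partial M|$ and hence ratio at least $1$. The remaining cases---$\partial D$ meets $\Sigma_i$ but exits the collar (forcing $|\partial D|\geq 2L$), or $\partial D$ avoids $\partial M$ entirely (so $\partial D\in C(A)$ and $|\partial D|\geq\length(A)$ directly)---are then handled without any retraction.
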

The proof of Theorem \ref{thm0} involves estimating the constants $h_1$ and $h_2$ of compact connected $2$-dimensional manifolds with cylindrical boundary.

\begin{lemma}
Let $(M,g)$ be a compact connected $2$-dimensional Riemannian manifold with $b\geq 2$ boundary components having length $a$. Assume that the boundary $\partial M=\Sigma_1\cup\dots\cup\Sigma_b$ has a neighborhood $V(\partial M)$ which is isometric to the union of disjoint right cylinders $\cup_{i=1}^{b}\Sigma_i\times [0,L)$.
Let $A$ be a domain in $M$ such that $V(\partial M)\subset A$ (me may have $A=M$). We have the following estimates of $h_1$ and $h_2$:
$$h_1(A)\geq \frac{2\min\{\length(A),L\}}{|A|},$$
$$h_2(A)\geq \frac{\min\{\length(A),L\}}{(b-1)a}.$$
\label{lemma1}
\end{lemma}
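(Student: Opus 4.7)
The plan is to prove both estimates by an exhaustive case analysis on an admissible domain $D\subset A$; by Yau's observation invoked in the proof of Proposition \ref{propCheeg1}, both $D$ and $A\setminus D$ may be assumed connected. Set $L':=\min\{\length(A),L\}$, and observe first that for any $i$ and $t\in(0,L)$ the horizontal circle $\Sigma_i\times\{t\}$ belongs to $C(A)$ (using $b\geq 2$), whence $L'\leq\length(A)\leq a$; this auxiliary bound is used throughout.

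The dichotomy is whether both $D$ and $A\setminus D$ contain at least one \emph{full} connected component of $\partial M$. In the balanced case, $\partial D$ is itself a curve in $C(A)$, so $|\partial D|\geq\length(A)\geq L'$, and the full component contained in $A\setminus D$ forces $|D\cap\partial M|\leq(b-1)a$; combined with $|D|\leq|A|/2$ this yields both claimed bounds at once. In the unbalanced case, after swapping $D$ with $A\setminus D$ if necessary, $D$ contains no full component; set $P=\{i:D\cap\Sigma_i\neq\emptyset\}\neq\emptyset$.

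The pivotal structural lemma in the unbalanced case is that no slice $D\cap(\Sigma_i\times\{t\})$ can equal the whole circle: a full slice would separate the lower region $(A\setminus D)\cap(\Sigma_i\times[0,t))$ from the rest of $A\setminus D$ (the only exits are $\Sigma_i\times\{t\}\subset D$ and $\Sigma_i\subset\partial M$), so the connectedness of $A\setminus D$ would force this lower region to be empty, yielding $\Sigma_i\subset D$ and contradicting the absence of full components. Hence every slice is either empty or a proper arc, so $\partial D$ meets $\Sigma_i\times\{t\}$ at every height where the slice is a proper arc. Since the height coordinate on $\mathrm{cyl}_i=\Sigma_i\times[0,L)$ is $1$-Lipschitz, its projection produces the coarea-type lower bound
\[
|\partial D\cap\mathrm{cyl}_i|\geq\bigl|\{t\in(0,L):D\cap(\Sigma_i\times\{t\})\text{ is a proper arc}\}\bigr|.
\]
If $|P|\geq 2$, the connectedness of $D$ and the disjointness of the cylinders force $D\cap\mathrm{cyl}_i$ to travel from $\Sigma_i$ to the inner end $\Sigma_i\times\{L^-\}$ for every $i\in P$ (any arc on $\Sigma_i$ can only be linked to one on a different $\Sigma_{i'}$ through $A\setminus V(\partial M)$), so slices are proper arcs at every height and $|\partial D\cap\mathrm{cyl}_i|\geq L$. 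The same conclusion holds when $|P|=1$ and $D$ extends into $A\setminus V(\partial M)$. In these two sub-cases, summing gives $|\partial D|\geq|P|L\geq 2L\geq 2L'$, and $|D\cap\partial M|\leq|P|a$ together with $b\geq 2$ yields both claims. If instead $|P|=1$ and $D\subset\mathrm{cyl}_{i_0}$, set $h:=\sup\{t:D\cap(\Sigma_{i_0}\times\{t\})\neq\emptyset\}$: the vertical projection gives $|\partial D|\geq h$, while the horizontal projection onto $\Sigma_{i_0}$ gives $|\partial D|\geq|D\cap\Sigma_{i_0}|$ (every vertical line through an interior point of a $D$-arc must leave the bounded set $D$, hence meet $\partial D$). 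Combined with $|D|\leq ah$, $|A|\geq baL$, $L'\leq a$, and $b\geq 2$, these inequalities produce $h_1(A)\geq 1/a\geq 2L'/|A|$ and $h_2(A)\geq 1\geq L'/((b-1)a)$.

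The main obstacle is the rigorous coarea step: one must justify that the height image of $\partial D\cap\mathrm{cyl}_i$ has Lebesgue measure at least the measure of the proper-arc-height set, despite $\partial D$ being possibly non-smooth, multi-component, or accumulating at points of $\partial M$ (which are excluded from $|\partial D|$ by the convention of Definition \ref{defcst1}). This is standard $BV$ machinery applied to $\chi_D$, but handling the multi-arc and accumulation subtleties requires careful component-by-component bookkeeping.
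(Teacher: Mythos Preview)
Your case split has a gap. In the ``balanced case'' you claim $\partial D\in C(A)$, but curves in $C(A)$ must avoid $\partial M$; nothing prevents both $D$ and $A\setminus D$ from containing a full $\Sigma_i$ while a further component $\Sigma_j$ is split between them, and then $\partial D$ meets $\Sigma_j\subset\partial M$, so $|\partial D|\geq\length(A)$ does not follow. The paper splits instead on whether $\partial D$ meets $\partial M$: if it does not, each $\Sigma_i$ lies wholly on one side and your $C(A)$ argument is valid; if it does, one argues directly inside the offending cylinder.

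There are further problems in your unbalanced case. First, your coarea bound is off by a factor of two: at each height where $D\cap(\Sigma_i\times\{t\})$ is a nonempty proper arc, its relative boundary in the circle has at least \emph{two} points, so coarea actually gives $|\partial D\cap\mathrm{cyl}_i|\geq 2L$, not merely $L$; with only $L$ the chain ``$|P|L\geq 2L$'' is false when $|P|=1$, and your $h_1$ claim fails in that sub-case. Second, after swapping, your bound $|D\cap\partial M|\leq|P|a$ concerns the swapped domain, whereas $h_2(A)$ is an infimum over the original admissible $D$ (the one with $|D|\leq|A|/2$), whose intersection with $\partial M$ you never control---when the swap is genuinely needed the original $D$ may contain several full components plus part of another, so $|D\cap\partial M|$ can be close to $ba$. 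The paper never swaps: once $\partial D$ meets some $\Sigma_i$ and $D\not\subset\mathrm{cyl}_i$, two strands of $\partial D$ traverse the full height of that cylinder, giving $|\partial D|\geq 2L$, and with the crude bound $|D\cap\partial M|\leq ba$ one obtains $2L/(ba)\geq L/((b-1)a)$ directly from $b\geq 2$.
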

\begin{proof}
We recall that
\[
h_1(A)=\inf \frac{|\partial D|}{|D|}
\]
where the infimum is taken among all domains satisfying $|D|\leq\frac{|A|}{2}$, $D\cap\partial M\not =\emptyset$ and such that $A\setminus D$ is also connected and intersects $\partial M$. Given such a domain $D$ the following situations can happen.
\begin{enumerate}

\item $\partial D$ intersects a boundary component $\Sigma_i$ and is contained in the cylindrical neighborhood of $\Sigma_i$. If $|\partial D|\geq L$, the fact that $aL<|A|$ gives $\frac{|\partial D|}{|D|}\geq\frac{|\partial D|}{aL}\geq\frac{L}{aL}=\frac{1}{a}>\frac{L}{|A|}$. If $|\partial D|<L$, we know from the isoperimetric inequality that the domain $D$ minimising $\frac{|\partial D|}{|D|}$ is the half-disk with radius $r=\frac{|\partial D|}{\pi}$ and area $\frac{|\partial D|^2}{2\pi}$. This gives $\frac{|\partial D|}{|D|}\geq |\partial D|\cdot\frac{2\pi}{|\partial D|^2}=\frac{2\pi}{|\partial D|}>\frac{2\pi}{L}>\frac{2\pi a}{|A|}\geq\frac{2\pi l(A)}{|A|}>\frac{2\length(A)}{|A|}$.

\item $\partial D$ intersects a boundary component $\Sigma_i$ but $D$ is not contained in the cylindrical neighbourhood of $\Sigma_i$. The length of the curve $\partial D$ between its extremity in $\Sigma_i$ and the point where it leaves the cylindrical neighborhood is greater or equal to $L$. Hence, we have $\frac{|\partial D|}{|D|}\geq \frac{2L}{|A|}$.
\item $\partial D$ contains a curve of $C(A)$. Since $\length(A)$ is the minimal length of such a curve, $|\partial D|\geq \length(A)$. Moreover, $D$ satisfies $|D|\leq \frac{|A|}{2}$. Hence we have $\frac{|\partial D|}{|D|}\geq \frac{2\length(A)}{|A|}$.
\end{enumerate}
In each case, we have either $\frac{|\partial D|}{|A|}\geq\frac{2\length(A)}{|A|}$ or  $\frac{|\partial D|}{|A|}\geq\frac{2L}{|A|}$. Since we have considered all possible cases, we conclude that  $ h_1\geq\frac{2\min\{\length(A),L\}}{|A|}$.

We now estimate $h_2(A)$. We recall that
\[
h_2(A):=\inf \frac{|\partial D|}{|D\cap \partial M|}
\]
where the infimum is taken among all domains satisfying $|D|\leq\frac{|A|}{2}$, $D\cap\partial M\not =\emptyset$ and such that $A\setminus D$ is also connected and intersects $\partial M$. Given such a domain $D$ the following situations can happen.

\begin{enumerate}
\item $\partial D$ intersects a boundary component $\Sigma_i$ and $D$ is contained in the cylindrical neighborhood of $\Sigma_i$. Since the the complement of $D$ in $M$ is connected, $\partial D$ is homotopic to $D\cap \Sigma_i$. Since $D\cap\Sigma_i$ is a geodesic arc and the cylindrical neighborhood has zero curvature, $|\partial D|\geq |D\cap\Sigma_i|= |D\cap\partial M|$ and finally $\frac{|\partial D|}{|D\cap \partial M|}\geq 1$.
\item $\partial D$ intersects a boundary component $\Sigma_i$ but $D$ is not contained in the cylindrical neighborhood of $\Sigma_i$. The length of the curve $\partial D$ between its extremity in $\Sigma_i$ and the point where it leaves the cylindrical neighborhood is greater or equal to $L$. Hence, we have $\frac{|\partial D|}{|D\cap \partial M|}\geq \frac{2L}{ba}\geq\frac{L}{(b-1)a}$.
\item $\partial D$ contains a curve of $C(A)$. Since $\length(A)$ is the minimal length of such a 
curve, $|\partial D|\geq \length(A)$. Moreover, $D$ cannot contain all the connected components of $\partial M$, which implies $|D\cap \partial M|\leq (b-1)a$. Hence, we have $\frac{|\partial D|}{|D\cap \partial M|}\geq \frac{\length(A)}{(b-1)a}$.
\end{enumerate}
We have considered all possible cases. To conclude, we observe that $\length(A)\leq a$ since the curves $\Sigma_i\times \{L\}$ belong to $C(A)$. Hence, we have $1\geq \frac{\length(A)}{a}\geq\frac{\length(A)}{(b-1)a}$. Since $\frac{|\partial D|}{|D\cap \partial M|}\geq\frac{\length(A)}{(b-1)a}$ or $\frac{|\partial D|}{|D\cap \partial M|}\geq\frac{L}{(b-1)a}$ for all possible $D$, we have $ h_2(A)\geq \frac{\min\{\length(A),L\}}{(b-1)a}$.
\end{proof}
We note that in higher dimensions, similar estimates cannot be obtained because in the second situation, the volume of $\partial D$ cannot be bounded below by $L$.
\begin{proof}[Proof of Theorem \ref{thm0}]
Theorem \ref{thm0} follows from Lemma \ref{lemma1} and Proposition \ref{propCheeg2}.
\end{proof}

The exponent of the geometric quantities involved in the estimate given in Theorem \ref{prop1} cannot be improved. This is obtained by showing that $\sigma_1$ and the lower bound are equivalent, in the sense that 
$\sigma_1$ goes to zero if and only if the lower bound goes to zero, for families of surfaces for which all geometric quantities involved in the lower bound except one are fixed. We recall that the Steklov eigenvalues of right cylinders can be computed.

\begin{prop} The Steklov eigenvalues of the right cylinder $S_R^1\times [-T,T]$, where $S_R^1$ denotes the circle of radius $R$, are
\[
0, \frac{1}{T}, \frac{k}{R}\tanh(\frac{k}{R}T)<\frac{k}{R}\coth(\frac{k}{R}T), \quad k\in \mathbb{N}^*.
\]
We note that if $\frac{T}{R}\geq \rho$, where $\rho\approx 1,19968$ is the positive root of $1=x\tanh(x)$, the first non-zero eigenvalue is $\frac{1}{T}$.
\label{vpcyl}
\end{prop}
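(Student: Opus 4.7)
The plan is to compute the spectrum explicitly by separation of variables. I parametrize the cylinder by $(\phi,t)\in[0,2\pi)\times[-T,T]$ with the metric $R^2\,d\phi^2+dt^2$, so that the Laplacian reads $\Delta=-\tfrac{1}{R^{2}}\partial_\phi^{2}-\partial_t^{2}$, and the outward normal derivative on the boundary equals $\pm\partial_t$ at $t=\pm T$. Expanding an eigenfunction in the Fourier basis $\{e^{ik\phi}\}_{k\in\mathbb{Z}}$ on $S_R^{1}$ reduces the Steklov problem to a family of decoupled one-dimensional ODE problems on $[-T,T]$, one for each mode $k$, of the form $g_k''=\tfrac{k^{2}}{R^{2}}g_k$ with the boundary conditions $g_k'(T)=\sigma g_k(T)$ and $-g_k'(-T)=\sigma g_k(-T)$.

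Next I solve each mode. For $k=0$ the solutions are $g_0(t)=\alpha t+\beta$; the boundary conditions force either $\alpha=0$ (recovering the constant eigenfunction with $\sigma=0$) or $\beta=0$ with $\sigma=\tfrac{1}{T}$. For $k\ne 0$ the general solution is $g_k(t)=A\cosh(\tfrac{k}{R}t)+B\sinh(\tfrac{k}{R}t)$; adding and subtracting the two boundary conditions decouples them into $\tfrac{k}{R}B\cosh(\tfrac{k}{R}T)=\sigma B\sinh(\tfrac{k}{R}T)$ and $\tfrac{k}{R}A\sinh(\tfrac{k}{R}T)=\sigma A\cosh(\tfrac{k}{R}T)$. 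This yields precisely two eigenvalues per $k\ge 1$, namely $\sigma=\tfrac{k}{R}\tanh(\tfrac{k}{R}T)$ (even-in-$t$ eigenfunctions, $B=0$) and $\sigma=\tfrac{k}{R}\coth(\tfrac{k}{R}T)$ (odd-in-$t$ eigenfunctions, $A=0$), each occurring with multiplicity two coming from the two angular modes $\cos(k\phi),\sin(k\phi)$. Collecting everything and invoking the completeness of the Fourier basis on $S_R^{1}$ together with standard spectral theory gives the list claimed.

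To finish, I check the final assertion on $\sigma_1$. The candidates for the smallest non-zero eigenvalue are $\tfrac{1}{T}$ and the numbers $\tfrac{k}{R}\tanh(\tfrac{k}{R}T)$, $\tfrac{k}{R}\coth(\tfrac{k}{R}T)$ for $k\ge 1$; since $\coth\ge\tanh$ on $(0,\infty)$, it suffices to compare $\tfrac{1}{T}$ with $\tfrac{k}{R}\tanh(\tfrac{k}{R}T)$. Setting $x_k=\tfrac{kT}{R}$, the inequality $\tfrac{1}{T}\le\tfrac{k}{R}\tanh(\tfrac{k}{R}T)$ is equivalent to $x_k\tanh(x_k)\ge 1$. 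Because $x\mapsto x\tanh x$ is strictly increasing on $[0,\infty)$ with a unique positive root $\rho$ of $x\tanh x=1$, the hypothesis $T/R\ge\rho$ yields $x_k\ge x_1\ge\rho$ for every $k\ge 1$, which gives the desired inequality and shows that $\sigma_1=\tfrac{1}{T}$.

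The only mildly delicate point is the bookkeeping for $k=0$, where one must be careful to extract the eigenvalue $\tfrac{1}{T}$ associated with the linear mode $u(\phi,t)=t$ in addition to the trivial constant mode; once this is handled, the rest of the argument is purely computational.
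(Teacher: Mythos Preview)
The paper does not actually prove this proposition; it is stated as a well-known computation (``We recall that the Steklov eigenvalues of right cylinders can be computed'') and no argument is given. Your separation-of-variables derivation is correct and is the standard way to obtain this spectrum: the reduction to the ODEs $g_k''=\tfrac{k^2}{R^2}g_k$ with the two Steklov boundary conditions, the identification of the $k=0$ linear mode giving $\sigma=\tfrac{1}{T}$, the even/odd splitting for $k\ge 1$ yielding $\tfrac{k}{R}\tanh(\tfrac{k}{R}T)$ and $\tfrac{k}{R}\coth(\tfrac{k}{R}T)$ with angular multiplicity two, and the monotonicity argument via $x\mapsto x\tanh x$ for the claim about $\sigma_1$ are all fine.
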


\begin{exe}
Consider the sequence $\{M_n\}_{n\geq 1}$ where $M_n$ are right cylinders that have height $4\pi n$ and whose bases are unit circles. 
Since $2\pi n\geq \rho \text{ } \forall n \geq 1$, $\sigma_1(M_n)=\frac{1}{2\pi n}$. Hence, we have $\frac{4\pi}{|M_n|}=\frac{1}{2\pi n}=\sigma_1(M_n)\geq \frac{\length(M_n)^2}{2(b-1)a|M_n|}= \frac{\pi}{|M_n|}$.
\label{excyl}
\end{exe}
\begin{exe} Consider a surface $M_{\epsilon}$ with two boundary components of length $1$, having a cylindrical neighborhood of length $L$ and connected by a thin cylinder $C_{\epsilon}$ of circumference $\epsilon<1$ and of length $\frac{1}{\epsilon}$ (see Figure \ref{fig2}). Consider the function taking the value $-1$ on one side of $C_{\epsilon}$, $1$ on the other side, and extended continuously to a linear function on $C_{\epsilon}$, that is, on $C_{\epsilon}=S^1_{\frac{\epsilon}{2\pi}} \times [-\frac{1}{2\epsilon},\frac{1}{2\epsilon}]$, we have $f(s,t)=2\epsilon t$. Its Dirichlet energy is zero except on $C_{\epsilon}$ where it is
\[
\int_{C_{\epsilon}}|\nabla f|^2 dv_g=\int_0^{\epsilon}\int_{\frac{-1}{2\epsilon}}^{\frac{1}{2\epsilon}}4\epsilon^2dtds=\int_0^{\epsilon} 4 \epsilon ds=4\epsilon^2.
\]
Since the restriction of $f$ to the boundary is orthogonal to a constant function and $\int_{\partial M}f^2 dS_g=\int_{\partial M}1 dS_g=2$, we obtain
\[
\sigma_1(M)=\min\left\{R(u): u\in H^1(M), \int_{\partial M}u=0\right\}\leq R(f)=\frac{4\epsilon^2}{2}=2\epsilon^2.
\]
We note that if $L$ is small enough, the volume of $M_{\epsilon}$ satisfies $|M_{\epsilon}|\leq 2$. Hence, we have $2\length(M_{\epsilon})^2=2\epsilon^2\geq \sigma_1(M_{\epsilon})\geq\frac{\length(M_{\epsilon})^2}{2(b-1)a|M_{\epsilon}|}\geq \frac{ \length(M_{\epsilon})^2}{4}$.
\label{ex3}
\begin{figure}[H]
\includegraphics[scale=0.15]{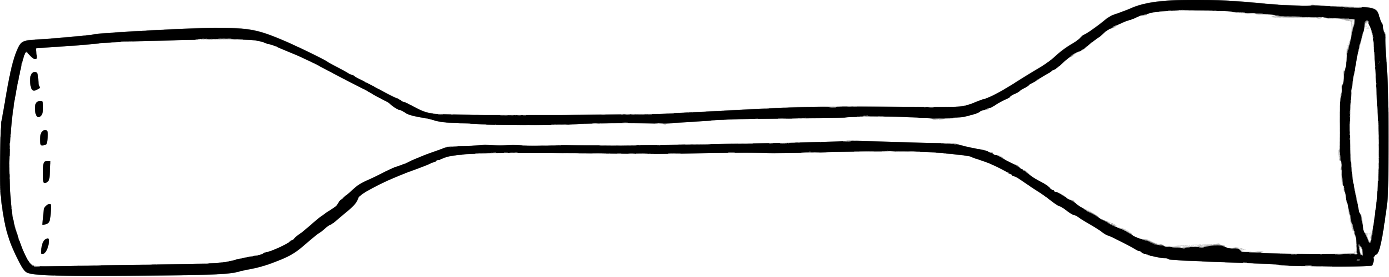}
\centering
\caption{A surface with two cylindrical boundary neighborhoods connected by a thin cylinder}
\label{fig2}
\end{figure}
 \end{exe}
Since we have shown that the exponent of $\min\{\length(M),1\}$ and $|M|$ cannot be improved, we can deduce that the exponent of $a$ must be $-1$ from the fact that the degree of homogeneity of the lower bound has to be consistent with the degree of homogeneity of $\sigma_1$. We conclude that, up to a constant, we cannot have a better lower bound for $\sigma_1$ depending on these geometric quantities (however, we may have different geometric quantities).

A lower bound is optimal if we can show that it goes to zero if and only if $\sigma_1$ goes to zero. Using the same strategy as in Example \ref{exSurfrev}, it is easy to construct a family of surfaces such that $\sigma_1$ is constant but the volume goes to infinity and therefore the lower bound given in Theorem \ref{prop1} tends to zero. This example shows that the volume of the manifold seems not to be an optimal quantity for estimating $\sigma_1$.  Theorem \ref{borneinf2} is an improvement of Theorem \ref{prop1} for surfaces whose Gaussian curvature is bounded below, which does not involve the volume of the manifold.

\begin{proof}[Proof of Theorem \ref{borneinf2}]
For $2\leq i\leq b$, we let $\gamma_i$ be a geodesic minimising the distance between $\Sigma_1$ and $\Sigma_i$. Around each $\gamma_i$, we consider the tube
\begin{multline*}
T_i=\{x\in M, \text{ there exists a geodesic }\xi \text{ of length } \\ l(\xi)<\inj_{\partial M}(M) \text{ from } x \text{ meeting } \gamma_i \text{ orthogonally}\}.
\end{multline*}
Since $\gamma_i$ meets $\partial M$ orthogonally, $T_i=\{x\in M, d(x,\gamma_i) < \inj_{\partial M}(M) \}$. We define ${A=\cup_{i=1}^{b}(\Sigma_i\times [0,\inj_{\partial M}(M)))\cup(\cup_{i=2}^{b}T_i)}$. We approximate the volume of $A$ by using a Bishop-Günther inequality for tubes (Theorem 8.16, point ii, in \cite{GrayTubes}). In the particular case of a tube $T$ of radius $r$ around a geodesic $\gamma$ in a surface whose Gaussian curvature is bounded from below by $\kappa<0$, this comparison result says that
\[
|T|\leq \frac{2l(\gamma)\sinh(\sqrt{-\kappa}r)}{\sqrt{-\kappa}}.
\]
\begin{figure}[H]
\includegraphics[scale=0.35]{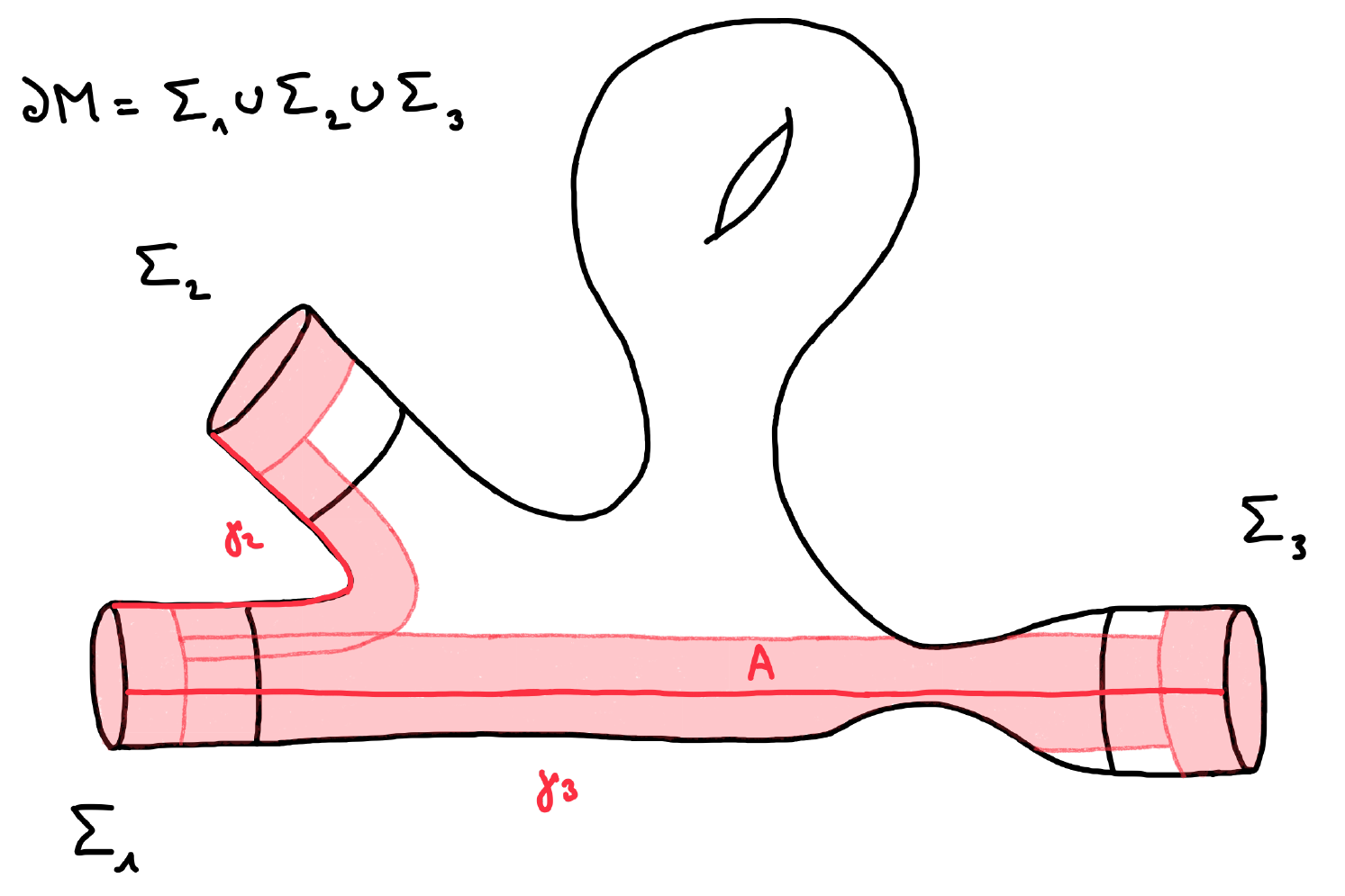}
\centering
\caption{The domain $A=\cup_{i=1}^{b}(\Sigma_i\times [0,\inj_{\partial M}(M)))\cup(\cup_{i=2}^{b}T_i)$}
\end{figure}
By applying this inequality to estimate the volume of the tubes $T_i$, we obtain
\begin{align*}
|A|&=|\cup_{i=1}^{b}(\Sigma_i\times [0,\inj_{\partial M}(M)))\cup(\cup_{i=2}^{b}T_i)|\\
&\leq ab\inj_{\partial M}(M)+\sum_{i=2}^{b}|(T_i)|\\
&\leq ab\inj_{\partial M}(M)+\sum_{i=2}^{b}\frac{2l(\gamma_i)\sinh(\sqrt{-\kappa}\inj_{\partial M}(M))}{\sqrt{-\kappa}}\\
&\leq ab\inj_{\partial M}(M)+\frac{2(b-1)\diam_M(\partial M)\sinh(\sqrt{-\kappa}\inj_{\partial M}(M))}{\sqrt{-\kappa}}.
\end{align*}
The set A can be approximated by smooth domains in the following way (for more details, see \cite{Daners}, Section 8.2). We define $V_n:=\{x\in A: d(x,\partial A)>\frac{1}{n}\}$ and consider $\phi_n$ a bump function for $\overline{V_{n}}$ supported in $V_{n+1}$ (on bump functions, see \cite{LeeSM}, Proposition 2.25). By Sard's Theorem, there exists $t_n\in (0,1)$ such that $E_n:=\{x\in M:\phi_n(x)>t_n\}$ is a smooth domain. Since $E_1\subset E_2\subset \dots$ and $\cup_{n\in\mathbb{N}^*}E_n=A$,  $|E_n|\rightarrow |A|$ as $n$ tends to infinity. Let $n_0$ be such that $\frac{1}{n_0}<\frac{\inj_{\partial M}(M)}{8}$. By taking $\tilde A=E_{n_0}$, we have that $\tilde A$ contains a cylindrical neighborhood of length $\frac{\inj_{\partial M}(M)}{2}$ of $\partial M$, $|A|\geq|\tilde A|$ and $\length(\tilde A)\geq \frac{\inj_{\partial M}(M)}{2}$. This last statement follows from the fact that
a curve $c$ which divides $\tilde A$ into two connected components, each containing at least one connected component of $\partial M$, must intersect a geodesic $\gamma_i$ at a point $x$ and cannot be contained in the ball $B_{\frac{\inj_{\partial M}(M)}{2}}(x)\subset \tilde A$.

Hence, by Theorem \ref{thm0}, we have
\[
\sigma_1(M)\geq \frac{\min\{\length(\tilde A),\frac{\inj_{\partial M}(M)}{2}\}^2}{2(b-1)a|\tilde A|}\geq\frac{\inj_{\partial M}(M)^2}{8(b-1)a|A|}.
\]
By combining the above inequality with the approximation of the volume of $A$, we obtain

\[
\sigma_1(M)\geq \frac{\inj_{\partial M}(M)^2}{8(b-1)a(ab\inj_{\partial M}(M)+\frac{2(b-1)\diam_M(\partial M)\sinh(\sqrt{-\kappa}\inj_{\partial M}(M))}{\sqrt{-\kappa}})}.
\]
Since by definition we have $\inj_{\partial M}(M)\leq1$, we obtain using the Taylor-Lagrange formula that 
\[
\frac{\sinh(\sqrt{-\kappa}\inj_{\partial M}(M))}{\sqrt{-\kappa}}\leq \cosh(\sqrt{-\kappa})\inj_{\partial M}(M).
\]
Hence, we have
\[
\sigma_1(M)\geq \frac{\inj_{\partial M}(M)}{8(b-1)a(ab+2(b-1)\diam_M(\partial M)\cosh(\sqrt{-\kappa}))}
\]
We note that this inequality is interesting in itself because it shows clearly how the different geometric quantities affect the lower bound. If we assume that $a\leq \diam_M(\partial M)$, we obtain
\begin{align*}
\sigma_1(M)&\geq \frac{\inj_{\partial M}(M)}{8(b-1)a(\diam_M(\partial M)b+2(b-1)\diam_M(\partial M)\cosh(\sqrt{-\kappa}))}\\
& \geq\frac{\inj_{\partial M}(M)}{16b^2\cosh(\sqrt{-\kappa})a\diam_M(\partial M)}\\
&=C(\kappa,b)\frac{\inj_{\partial M}(M)}{a \diam_M(\partial M)},
\end{align*}
where $C(\kappa,b)=\frac{1}{16b^2\cosh(\sqrt{-\kappa})}$.
\end{proof}

\begin{remark}
The exponent of the geometric quantities involved in Theorem \ref{borneinf2} cannot be improved. To show this, we proceed in the same way as for Theorem \ref{prop1}. We first observe that the exponent of the diameter of the boundary cannot be improved because the family of right cylinders of fixed base and growing height $\{M_n\}_{n\geq1}$ of Example \ref{excyl} satisfy $\sigma_1(M_n)=\frac{1}{2\pi n}=\frac{2}{\diam_M(\partial M_n)}$ and, except the diameter of the boundary, the quantities involved in the lower bound are fixed. We consider now the family of right cylinders $\{M_a\}_{a\geq1}$ of height $2$ and growing base of length $a$. The only quantities involved in the lower bound that are changing are the diameter of the boundary and the length of the boundary. We have $\sigma_1(M_a)\leq \frac{4\pi}{a^2}$, which shows that the exponent of $a$ is also optimal because we have already showed that the exponent of the diameter of the boundary cannot be improved. For obtaining that the exponent of the injectivity radius if optimal, we note that in Example \ref{ex3} we can construct the surfaces $M_{\epsilon}$ so that their Gaussian curvature is bounded from below. This can be done by joining the inner cylinder and the two cylindrical neighborhoods of the boundary by a cylinder of constant Gaussian curvature equal to $-1$ and smoothing the joints. Here, the only quantities involved in the lower bound that are changing are the injectivity radius and the diameter of the boundary. From Example \ref{ex3}, we have $\sigma_1(M_{\epsilon})\leq 2\epsilon^2\leq 8\inj_{\partial M_{\epsilon}}(M_{\epsilon})^2$. On the other hand, by construction, $\diam_M(\partial M_\epsilon)$ is of the same order as $\frac{1}{\inj_{\partial M_{\epsilon}}(M_{\epsilon})}$ as $\epsilon$ goes to zero. This implies that there exists a constant $c$ such that $\sigma_1(M_{\epsilon})\geq  c \inj_{\partial M_{\epsilon}}(M_{\epsilon})^2$. Hence the exponent of the injectivity radius is optimal because we have already showed that the exponent of the diameter of the boundary cannot be improved.
\label{remopt}
\end{remark}

We remark that if $a$ goes to zero, $\length(M)$ and $\inj_{\partial M}(M)$ also go to zero. Therefore, Theorems \ref{prop1} and \ref{borneinf2} do not say that $\sigma_1$ goes to infinity as $a$ goes to zero, which is not true, as shown by the following example. We consider the sequence of right cylinders $\{S^1_{\frac{1}{n}}\times [-1,1]\}_{n\geq 1}$. Proposition \ref{vpcyl} shows that if $n\geq2$, $\sigma_1=1$. By taking the sequence $\{S^1_{\frac{1}{n}}\times [-n,n]\}_{n\geq 1}$, we even have that $\sigma_1$ tends to zero as the length of the boundary tends to zero. This is in contrast to the case of surfaces with one cylindrical boundary component where Lemma \ref{compcyl} shows that $\sigma_1$ goes to infinity as the length of the boundary goes to zero.

\end{subsection}

\begin{subsection}{Geometric bounds on the low Steklov eigenvalues of a compact hyperbolic surface with geodesic boundary}

A compact hyperbolic surface of signature $(g,b)$ is a compact 2-dimensional Riemannian manifold of constant Gaussian curvature equal to $-1$ with genus $g$ and a geodesic boundary having $b$ connected components. An important property of hyperbolic surfaces is that they are isometric to a warped product around simple closed geodesics.

\begin{prop} Let $M$ be a closed hyperbolic surface of genus $g\geq2$ and let $\gamma_1,\dots,\gamma_m$ be pairwise disjoint simple closed geodesics on $M$. Then
$m\leq 3g-3$ and there exist simple closed geodesics $\gamma_{m+1},\dots,\gamma_{3g-3}$ which, together with $\gamma_1,\dots,\gamma_m$, decompose $M$ into surfaces of signature $(0,3)$. Moreover, the collars 
\[
K(\gamma_i)=\{p\in M, dist(p,\gamma_i)\leq w(\gamma_i)\}
\]
where
\[
w(\gamma_i)=\arcsinh(\frac{1}{\sinh(\frac{1}{2}l(\gamma_i))})
\]
are pairwise disjoint and each collar $K(\gamma_i)$ is isometric to the cylinder $S^1\times [-w(\gamma_i),w(\gamma_i)]$ with the metric $g(s,t)=\frac{l^2(\gamma_i)\cosh^2(t)}{(2\pi)^2}g_{S^1}(s)+dt^2$ where $g_{S^1}$ is the canonical metric on $S^1$.
\label{Collar}
\end{prop}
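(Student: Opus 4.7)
The plan is to recognize this as the classical Collar Lemma (as in Buser's \emph{Geometry and Spectra of Compact Riemann Surfaces}, Chapter 4) and proceed in three stages: the topological extension to a pants decomposition, the local warped-product model of a collar, and the uniform disjointness estimate.

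For the first assertion, I would use Euler-characteristic bookkeeping. Cut $M$ along $\gamma_1,\dots,\gamma_m$: each resulting component has geodesic boundary, negative Euler characteristic, and if any component fails to be a pair of pants (i.e.\ has nonabelian fundamental group of rank $\geq 3$) then it contains a non-peripheral simple closed curve, which can be tightened to a simple closed geodesic disjoint from the boundary. Iterating yields additional disjoint simple closed geodesics $\gamma_{m+1},\dots,\gamma_N$ until every piece is a pair of pants. Because each pair of pants has $\chi=-1$ and three boundary circles, and each $\gamma_i$ bounds exactly two pants-sides, the counts $-N+P\cdot(-1) = 2-2g$ and $3P=2N$ for $P$ the number of pants force $N=3g-3$, in particular $m\le 3g-3$.

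For the collar itself, I would work in the universal cover $\mathbb{H}^2$: a simple closed geodesic $\gamma_i$ of length $l:=l(\gamma_i)$ lifts to the axis of a hyperbolic isometry with translation length $l$, and Fermi coordinates $(\tilde s,t)\in\mathbb{R}\times\mathbb{R}$ with respect to this axis put the hyperbolic metric in the form $\cosh^2(t)\,d\tilde s^{\,2}+dt^2$. Quotienting by the cyclic group generated by the axis isometry identifies $\tilde s$ with an $l$-periodic parameter; rescaling $s:=\frac{2\pi}{l}\tilde s$ gives the stated warped-product form on $S^1\times(-w(\gamma_i),w(\gamma_i))$. It remains to verify that this parametrisation is actually an isometric embedding for $|t|\leq w(\gamma_i)$, which is the content of the disjointness statement for $K(\gamma_i)$ from itself and from the other collars.

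The main obstacle is therefore the global disjointness of the collars with the sharp width $w(\gamma_i)=\arcsinh(1/\sinh(l(\gamma_i)/2))$. I would argue by contradiction with a two-lift computation: if two collars $K(\gamma_i)$, $K(\gamma_j)$ (possibly $i=j$ corresponding to the tube wrapping around itself) intersect, then in $\mathbb{H}^2$ one finds lifts $\tilde\gamma_i$, $\tilde\gamma_j$ of the axes together with an axis of a primitive isometry whose minimal common perpendicular segment has length smaller than $w(\gamma_i)+w(\gamma_j)$. Applying the hyperbolic right-angled pentagon / Fermi-coordinate identity $\sinh(\tfrac{l}{2})\,\sinh(w)=1$ that defines $w=w(\gamma_i)$, one deduces that the two axes must in fact cross, contradicting the disjointness of $\gamma_i$ and $\gamma_j$ on $M$ (and, in the self-intersection case, contradicting the simplicity of $\gamma_i$). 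This gives the embedded collars, completes the warped-product description and concludes the proof.
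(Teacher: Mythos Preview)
The paper does not actually prove this proposition: immediately after the statement it writes ``For a proof of this result, we refer to \cite{BuserBook92}, Theorem 4.1.1.'' Your sketch is precisely the outline of Buser's argument (pants extension via Euler characteristic, Fermi coordinates about a geodesic axis, and the right-angled pentagon identity $\sinh(l/2)\sinh(w)=1$ for disjointness), so there is nothing to compare.

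One small slip: your Euler-characteristic equation $-N+P\cdot(-1)=2-2g$ is not the right identity. Cutting along circles does not change $\chi$, so one simply has $2-2g=\chi(M)=-P$ and $3P=2N$, whence $P=2g-2$ and $N=3g-3$. Your stated conclusion is correct even though the displayed relation is off.
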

For a proof of this result, we refer to  \cite{BuserBook92}, Theorem 4.1.1. A direct consequence is that a hyperbolic surface with geodesic boundary has a boundary neighborhood which is isometric to a union of disjoint warped products. This implies the following approximation of the Steklov eigenvalues.

\begin{lemma}
Let $M$ be a hyperbolic surface with $b\geq2$ geodesic boundary components of length $a$. Then, the Steklov eigenvalues $\sigma_k$ of $M$ satisfy
\begin{align*}
0\leq\sigma_k\leq \frac{1}{\arctan(\frac{1}{\sinh{\frac{a}{2}}})}
\end{align*}
if $k<b$, and
\begin{align*}
\frac{2\pi j}{a}\tanh(\frac{2\pi j}{a}\arctan(\frac{1}{\sinh(\frac{a}{2})}))\leq\sigma_k\leq \frac{2\pi j}{a}\coth(\frac{2\pi j}{a}\arctan(\frac{1}{\sinh(\frac{a}{2})}))
\end{align*}
if $(2j-1)b\leq k< (2j+1)b$, where $j\in\mathbb{N}^*$.
\label{compSNhyp}
\end{lemma}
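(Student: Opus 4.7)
The plan is to mirror the proof of Lemma \ref{compcyl} for the flat cylindrical case, replacing the explicit eigenvalue computation by a conformal reduction to a flat cylinder. The starting point is Proposition \ref{Collar}, applied to the double $\tilde M$ of $M$ (the boundary geodesics $\Sigma_i$ become interior simple closed geodesics in $\tilde M$, so they satisfy the hypothesis of the collar theorem). The half-collar on the $M$-side of each $\Sigma_i$ gives an isometric neighbourhood $V(\partial M) = \cup_{i=1}^{b}\Sigma_i\times[0,w(a)]$ of the boundary, with $w(a)=\arcsinh(1/\sinh(a/2))$, carrying the warped-product metric $g = \cosh^2(t)\,ds^2 + dt^2$, where $s\in\mathbb R/a\mathbb Z$ parametrizes $\Sigma_i$ and $t\in[0,w(a)]$.

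Next I would perform the conformal change of variable $\tau = \arctan(\sinh t)$, so that $d\tau = dt/\cosh t$ and
\[
g = \cosh^2(t)\bigl(ds^2 + d\tau^2\bigr).
\]
This exhibits each half-collar as conformal to the flat cylinder $\mathbb R/a\mathbb Z \times [0,T]$, with
\[
T = \arctan\bigl(\sinh(w(a))\bigr) = \arctan\bigl(1/\sinh(a/2)\bigr).
\]
Crucially, the conformal factor equals $1$ at $\tau=0$, which is exactly the Steklov boundary, so the induced metric on $\partial M$ is unchanged; together with the conformal invariance of the Dirichlet energy in dimension two, this shows that the Rayleigh quotient (and thus the full spectrum) of each of the three problems—Steklov, mixed Steklov-Neumann, and mixed Steklov-Dirichlet—on a half-collar coincides with the corresponding eigenvalues on the flat half-cylinder $\mathbb R/a\mathbb Z \times [0,T]$.

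On the flat half-cylinder, the mixed eigenvalues can be computed by separation of variables $u(s,t)=e^{2\pi i k s/a}\psi(t)$ exactly as in the proof of Lemma \ref{compcyl}: for $k=0$ one gets $\sigma=0$ (Neumann case) and $\sigma=1/T$ (Dirichlet case); for $k\ge 1$ the condition $\psi''=(2\pi k/a)^2\psi$ together with Neumann (resp.\ Dirichlet) at $t=T$ forces $\psi(t)\propto\cosh((2\pi k/a)(T-t))$ (resp.\ $\sinh((2\pi k/a)(T-t))$), and the Steklov condition at $t=0$ yields $\sigma = (2\pi k/a)\tanh((2\pi k/a)T)$ (resp.\ $(2\pi k/a)\coth((2\pi k/a)T)$), each with multiplicity $2$. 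Since $V(\partial M)$ is a disjoint union of $b$ copies, each eigenvalue is repeated $b$ times, giving the stated multiplicity pattern.

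Finally, the comparison (\ref{compvp}) applied with $A=V(\partial M)$ gives $\sigma_k^N(A)\le\sigma_k(M)\le\sigma_k^D(A)$, which immediately yields both estimates of the lemma after substituting $T=\arctan(1/\sinh(a/2))$. The only delicate point is step two, namely justifying that the conformal change of variable preserves all three eigenvalue problems, but this reduces to observing the equality of conformal factor and $1$ at $\tau=0$; everything else is an adaptation of the flat case already handled in Lemma \ref{compcyl}.
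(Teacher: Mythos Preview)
Your proof is correct and follows essentially the same route as the paper: invoke the collar lemma to obtain half-collar neighbourhoods of the boundary geodesics, bracket $\sigma_k(M)$ by the mixed Steklov--Neumann and Steklov--Dirichlet spectra on the union of these half-collars via (\ref{compvp}), and then compute those mixed spectra explicitly. The only differences are cosmetic: the paper glues a surface of signature $(1,1)$ to each boundary component rather than doubling (which cleanly avoids the edge case where the double has genus one, e.g.\ $g=0$, $b=2$), and it merely asserts that the warped-product mixed eigenvalues ``can be explicitly calculated'', whereas your conformal substitution $\tau=\arctan(\sinh t)$ together with the observation that the conformal factor equals $1$ on the Steklov boundary is exactly a clean way to carry out that calculation.
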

\begin{proof} 
Let $\Sigma_1,...,\Sigma_b$ be the $b$ boundary components, where $l(\Sigma_1)=...=l(\Sigma_b)=a$. By gluing a hyperbolic surface of signature $(1,1)$ to each boundary component, we obtain a closed hyperbolic surface  of genus  $g\geq 2$. Theorem  \ref{Collar} says that the collars $K(\Sigma_i)=\{p\in M, dist(p,\Sigma_i)\leq w(\Sigma_i)\}$, where $w(\Sigma_i)=\arcsinh(\frac{1}{\sinh(\frac{a}{2})})$, are disjoint and isometric to cylinders $S^1\times [0,w(\gamma_i)]$ with the metric $g(s,t)=\frac{a^2\cosh^2(t)}{(2\pi)^2}g_{S^1}(s)+dt^2$. Let $A=\cup_i K(\Sigma_i)$ be the union of these collars. We consider the mixed Steklov-Neumann and Steklov-Dirichlet problems on $A$. From Equation \ref{compvp}, we have
\[
\sigma_i^N(A)\leq\sigma_i(M)\leq\sigma_i^D(A).
\]
Since the $K(\Sigma_i)$ are warped products, the eigenvalues of these mixed problems can be explicitly calculated. The result is obtained by replacing $\sigma_i^N(A)$ and $\sigma_i^D(A)$ by the their exact value in the previous equation.

\end{proof}

A classical result due to L. Bers says that every closed hyperbolic surface of genus $g\geq 2$ admits a decomposition into surfaces of signature $(0,3)$ such that the length of the separating geodesics is controlled by a constant depending on the genus. We give a statement of this result due to P. Buser  (see \cite{BuserBook92}, Theorem 5.2.3) which is convenient to deduce an analog result for surfaces with geodesic boundary of controlled length.

\begin{prop}
Let $M$ be a closed hyperbolic surface of genus $g\geq 2$ and let $\gamma_1,...,\gamma_m$ be the set of all distinct simple closed geodesics of length $l\leq 2\arcsinh(1)$. This system is extendable to a partition $\gamma_1,\dots,\gamma_{3g-3}$ satisfying
\[
l(\gamma_k)\leq 4k\log(\frac{8\pi(g-1)}{k}), \quad k=1,...,3g-3.
\]
\end{prop}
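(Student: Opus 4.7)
The plan is to build the pants decomposition $\gamma_1,\dots,\gamma_{3g-3}$ inductively, adjoining at each step a shortest admissible simple closed geodesic and bounding its length via an area-packing argument that exploits Gauss--Bonnet ($|M|=4\pi(g-1)$).

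First I would check that the given family $\gamma_1,\dots,\gamma_m$ of all simple closed geodesics of length at most $2\arcsinh(1)$ is pairwise disjoint, so in particular $m\leq 3g-3$ and the system is extendable to a pants decomposition. By Proposition \ref{Collar}, each such $\gamma_i$ carries a standard collar of width $w(\gamma_i)=\arcsinh(1/\sinh(l(\gamma_i)/2))\geq\arcsinh(1)$, and disjointness of the collars forces disjointness of the geodesics themselves. For indices $k\leq m$ the length bound $l(\gamma_k)\leq 4k\log(8\pi(g-1)/k)$ is then immediate, since $2\arcsinh(1)<4\log(8\pi(g-1))$ whenever $g\geq 2$.

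For the inductive step, assume $\gamma_1,\dots,\gamma_k$ have been constructed and some connected component $S$ of $M\setminus(\gamma_1\cup\dots\cup\gamma_k)$ is not yet a pair of pants. I would choose $\gamma_{k+1}$ to be a shortest simple closed geodesic in $S$ that is not homotopic to any component of $\partial S$. To bound $l(\gamma_{k+1})$, pick a point $p\in\gamma_{k+1}$ and grow a metric ball $B(p,r)$ inside $S$. So long as the exponential map is injective on the disc of radius $r$, the ball is an isometrically embedded hyperbolic disc of area $2\pi(\cosh r-1)$. As soon as injectivity fails at some radius $r$, there is an essential loop through $p$ of length at most $2r$ that avoids the previously chosen curves, and its geodesic representative provides a candidate for $\gamma_{k+1}$, giving $l(\gamma_{k+1})\leq 2r$. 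To make the factor $k$ appear in the denominator of the logarithm, I would run this ball-growth argument simultaneously at one point on each of $\gamma_1,\dots,\gamma_{k+1}$: these $k+1$ balls must be pairwise disjoint up to a common radius $r_{\max}$, and saturating the area inequality $(k+1)\cdot 2\pi(\cosh r_{\max}-1)\leq 4\pi(g-1)$ yields $r_{\max}\sim\log(8\pi(g-1)/(k+1))$. After accounting for the factor $2(k+1)$ arising from possible wrappings around each curve and absorbing multiplicative constants, this gives $l(\gamma_{k+1})\leq 4(k+1)\log(8\pi(g-1)/(k+1))$.

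The main obstacle is precisely the final constant-tracking step: extracting the $k$ that sits in the denominator of the logarithm. A naive packing around $\gamma_{k+1}$ alone only delivers a bound of order $\log(g-1)$, with no $k$-dependence; the improvement genuinely needs the multi-curve packing sketched above, combined with a careful bookkeeping of how far the embedded ball at $p$ can be grown before it wraps back onto $\gamma_{k+1}$ or touches one of the earlier curves. The remaining combinatorial point is that at the end of the induction all remaining components are pairs of pants, which terminates the process at exactly $3g-3$ curves.
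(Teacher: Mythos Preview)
The paper does not prove this proposition. It is quoted verbatim from Buser's book (\cite{BuserBook92}, Theorem 5.2.3) and used as a black box; the only argument the paper supplies in this vicinity is the short deduction of Corollary~\ref{bersbord} from it. So there is nothing to compare your attempt against in the paper itself.

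That said, your sketch is aimed at the right target: Buser's proof in \cite{BuserBook92} does proceed by an inductive construction, adjoining at each stage a shortest admissible simple closed geodesic and bounding its length by an area/collar packing argument against the Gauss--Bonnet area $4\pi(g-1)$. Two points are worth flagging. First, your appeal to Proposition~\ref{Collar} to conclude that the short geodesics are pairwise disjoint is not quite what that proposition says: Proposition~\ref{Collar} asserts that collars around an already disjoint system are disjoint. The disjointness of all simple closed geodesics of length at most $2\arcsinh(1)$ is a separate (standard) consequence of the collar lemma, but it needs its own sentence. Second, you yourself identify the genuine gap: the step ``after accounting for the factor $2(k+1)$ \dots\ and absorbing multiplicative constants'' is precisely where all the work lies. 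Buser's argument does not simply grow metric balls at one point on each curve; it packs half-collars of controlled width along the full length of the already-chosen geodesics together with a neighbourhood of the new curve, and extracting the $k$ in the denominator of the logarithm requires this more delicate bookkeeping. As written, your sketch stops short of a proof at exactly the place where the specific constant $4k\log(8\pi(g-1)/k)$ has to be produced.
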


\begin{cor} There exists a constant $L_{g+b}$, depending only on $g$ and $b$, such that every hyperbolic surface $M$ of genus $g$ with $b\geq2$ geodesic boundary components of length $l\leq 2\arcsinh(1)$ can be decomposed into surfaces of signature $(0,3)$ by simple closed geodesics $\gamma_1,...,\gamma_{3g-3+b}$ satisfying
\[
l(\gamma_i)\leq L_{g+b}, \quad i=1,...,3g-3+b.
\]
\label{bersbord}
\end{cor}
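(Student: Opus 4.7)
The strategy is to reduce the statement to the preceding closed-surface Bers theorem by capping off $M$. The target constant $L_{g+b}$ is precisely that theorem's bound for a closed hyperbolic surface of genus $g+b$, which suggests gluing to each boundary component of $M$ a compact hyperbolic surface of signature $(1,1)$ whose geodesic boundary has the same length as the corresponding component of $\partial M$. Such building blocks exist for every positive boundary length, and the gluing, performed along a common geodesic, extends the hyperbolic structure smoothly across each seam. An Euler characteristic computation shows that the resulting closed surface $\tilde{M}$ has genus $\tilde{g}=g+b\geq 2$, so the preceding Bers-type proposition applies.

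I would then apply that proposition to $\tilde{M}$, starting from the full collection of simple closed geodesics of length at most $2\arcsinh(1)$. By hypothesis the $b$ glued boundary curves $\Sigma_1,\ldots,\Sigma_b$ all belong to this collection, so the proposition extends it to a partition $\gamma_1,\ldots,\gamma_{3(g+b)-3}$ of $\tilde{M}$ into surfaces of signature $(0,3)$ with $l(\gamma_k)\leq 4k\log(8\pi(g+b-1)/k)$. A short calculus check --- the function $k\mapsto k\log(c/k)$ is increasing on $[1,c/e]$, and $3(g+b)-3<8\pi(g+b-1)/e$ because $3e<8\pi$ --- would then upgrade the bound to the uniform estimate $l(\gamma_k)\leq L_{g+b}$ for every $k$.

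The last step is to extract a pants decomposition of $M$ itself. Because the $\Sigma_i$ are among the cutting curves, removing them from $\tilde{M}$ separates it into $M$ and the $b$ glued pieces $H_1,\ldots,H_b$, and every other $\gamma_k$, being disjoint from the $\Sigma_i$, lies entirely in exactly one of these $b+1$ complementary pieces. Since the full system pants-decomposes $\tilde{M}$, its restriction to each piece must also pants-decompose that piece; a surface of signature $(1,1)$ requires exactly one internal cut, so each $H_i$ absorbs exactly one of the $\gamma_k$, leaving $3(g+b)-3-b-b=3g-3+b$ curves inside $M$. These simple closed geodesics decompose $M$ into pairs of pants and all satisfy $l(\gamma_k)\leq L_{g+b}$, as required.

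No individual step looks like a serious obstacle; the plan is essentially routine once one observes the numerological match between $L_{g+b}$ and Bers's bound for genus $g+b$. The only points requiring a bit of care are the existence and smooth gluing of the $(1,1)$ building blocks, the monotonicity of $k\log(c/k)$ over the relevant range, and the cut-counting that forces exactly one $\gamma_k$ inside each $H_i$.
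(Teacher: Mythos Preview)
Your proposal is correct and follows essentially the same route as the paper: glue a $(1,1)$ surface to each boundary component to obtain a closed surface of genus $g+b$, apply the Bers--Buser proposition starting from the short geodesics (which include the former boundary curves), and then count that among the $3(g+b)-3$ cutting curves, $b$ are the $\Sigma_i$ and exactly one lies in each glued handle, leaving $3g-3+b$ curves in $M$. Your explicit verification that $k\mapsto k\log(c/k)$ is increasing on the relevant range is a detail the paper leaves implicit when it passes to the uniform bound $L_{g+b}$.
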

\begin{proof}
Let $\gamma_1,...,\gamma_b$ be the geodesic boundary components of $M$. By gluing a hyperbolic surface of signature $(1,1)$ to each boundary component, we obtain a closed hyperbolic surface $M'$ of genus $g+b\geq 2$ and $\gamma_1,\dots,\gamma_b$ are closed geodesics of $M'$ of length $l\leq2\arcsinh(1)$. We add to this set all distinct simple closed geodesics on $M'$ of length $l\leq 2\arcsinh(1)$. From Bers'Theorem, the resulting set $\gamma_1,\dots,\gamma_m$ can be extended to a partition $\gamma_1,...,\gamma_{3(g+b)-3}$ of simple closed geodesics satisfying $l(\gamma_k)\leq 4k\log(\frac{8\pi(g+b-1)}{k}) \text{ for }k=1,...,3(g+b)-3$. In particular, there exists a constant $L_{g+b}=4(3(g+b)-3)\log(\frac{8\pi(g+b-1)}{3(g+b)-3})$ such that $l(\gamma_k)\leq L_{g+b}$ for $k=1,...,3(g+b)-3$. Among this family of geodesics, we have the $b$ geodesics $\gamma_1,...,\gamma_b$ of the boundary of $M$ and we also have $b$ simple closed geodesics that divide the surfaces of signature $(1,1)$ glued at each boundary to make them surfaces of signature $(0,3)$. The $3g-3+b$ remaining geodesics decompose $M$ into surfaces of signature $(0,3)$ and their length is bounded by $L_{g+b}$.
\end{proof}

We are now able to give the proof of Theorem \ref{thmhyp}. The strategy is the same as the strategy used in \cite{SWY} for obtaining a result for Laplace eigenvalues.

\begin{proof}[Proof of Theorem \ref{thmhyp}]
\item
\begin{paragraph}{Step 1: $\length_n\leq\beta_1$ where $\beta_1$ is a constant depending only on $g$ and $b$.}
From Corollary \ref{bersbord} there exists a family of simple closed geodesics of length $l\leq L_{g+b}$, dividing $M$ into $3g-3+b$ surfaces of signature $(0,3)$. Since we assume that $M$ is connected, each of this surfaces of signature $(0,3)$ contains at most two components of $\partial M$. Hence, by choosing a subset of these geodesics, we can obtain for $1\leq n<\lceil \frac{b}{2}\rceil$ a division of $M$ into $n+1$ connected components, each one containing at least one component of $\partial M$. Let $\gamma$ denote the curve consisting of the union of these geodesics. Because $\gamma$ consists of at most $3g-3+b$ geodesics of length $l\leq L_{g+b}$, there exists a constant $\beta_1$, depending only on $g$ and $b$ and such that $l(\gamma)\leq \beta_1$. Let $c\in C_n(M)$ be a curve satisfying $l(c)=\length_n$. Since $\gamma\in C_n(M)$, we have $\length_n \leq l(\gamma) \leq \beta_1$. For $\lceil \frac{b}{2}\rceil\leq n<b$, the assumption says that there exists $c\in C_n(M)$ such that each simple closed geodesic of $c$ is of length $l\leq L_{g+b}$. Since $c$ consists of at most $3g-3+b$ geodesics, we have $\length_n \leq l(c) \leq \beta_1$.
\end{paragraph}

\begin{paragraph}{Step 2: $\sigma_n\leq C_2 \frac{\length_n}{a}$.}
If $\length_n>1$, we obtain from the combination of Lemma \ref{compSNhyp} and the hypothesis that $a\leq2\arcsinh(1)$ that $\sigma_n\leq \frac{1}{\arctan(\frac{1}{\sinh{\frac{a}{2}}})}<\frac{8\arcsinh(1)\length_n}{\pi a}$. Now assume that $\length_n\leq1$.
Let $c\in C_n(M)$ be the curve from step 1 satisfying $l(c)=\length_n$. This curve decompose $M$ into $n+1$ connected components $M_1,\dots,M_{n+1}$, each one containing at least one boundary component. We suppose $c=\gamma_1\cup\dots\cup \gamma_p$ where the $\gamma_i$ are simple closed geodesics on $M$. From Proposition \ref{Collar}, we know that there exist disjoint collars $K(\gamma_1),\dots,K(\gamma_p)$ about the geodesics $\gamma_1,\dots, \gamma_p$. If $K_j\cap M_i\not=\emptyset$, $K_j\cap \overline{M_i}$ is isometric to $ 
S^1\times [0,w(\gamma_j)]$ with the metric $g(s,t)=\frac{l^2(\gamma_i)\cosh^2(t)}{(2\pi)^2}g_{S^1}(s)+dt^2$, and $K_j\cap \partial M_i$ corresponds to $S^1\times \{0\}$. The upper bound is obtained by using test functions. We define
\[
\phi_i(x)=
\begin{cases}1 & \text{if } x \in M_i\setminus \cup_{j=1}^{p}K_j;\\
\phi_{i,j}(x)& \text{if } x\in M_i\cap K_j \text{ for a }j=1,\dots,p;\\
0 &\text{ otherwise;}
\end{cases}
\]
and
\begin{align*}
\phi_{i,j}:S^1\times [0,w(\gamma_j)]& \rightarrow \mathbb{R}\\
(s,t)&\mapsto \frac{\arctan(\sinh(t))}{\arctan(\frac{1}{\sinh(\frac{l(\gamma_i)}{2})})}.
\end{align*}
The Dirichlet energy of this function on the half-collar $M_i\cap K_j$ is
\[
\int_{S^1\times [0,w(\gamma_i)]}|\nabla\phi_{i,j} |^{2}dv=\frac{l(\gamma_j)}{\arctan(\frac{1}{\sinh(\frac{l(\gamma_j)}{2})})}.
\]
Let $E$ be the set of indices $j$ such that $M_i\cap K_j\not = \emptyset$. The total Dirichlet energy of $\phi_i$ satisfies
\begin{align*}
\int_{M_i}|\nabla\phi_i |^{2}dv&=\sum_{j\in E}\frac{l(\gamma_j)}{\arctan(\frac{1}{\sinh(\frac{l(\gamma_j)}{2})})}\\
&\leq\frac{\sum_{j\in E}l(\gamma_j)}{\arctan(\frac{1}{\sinh(\frac{\sum_{j=1}^{r}{l(\gamma_j)}}{2})})}\\
&\leq\frac{\length_n}{\arctan(\frac{1}{\sinh(\frac{\length_n}{2})})}.
\end{align*}
We also have
\[
\int_{\partial M_i} \phi_i^2 dS=m\times a\geq a,
\]
where $m$ is the number of boundary components included in $M_i$. Hence the Rayleigh quotient of $\phi_i$ satisfy
\[
R(\phi_i)\leq \frac{\length_n}{a \arctan(\frac{1}{\sinh(\frac{\length_n}{2})})}.
\]
Since $\length_n\leq 1$, we have $\frac{1}{\arctan(\frac{1}{\sinh(\frac{\length_n}{2})})}<\frac{1}{\arctan(\frac{1}{\sinh(\frac{1}{2})})}=:\beta_2$ and
\[
R(\phi_i)\leq \beta_2\frac{\length_n}{a}.
\]
Let $V$ be the linear span of $\phi_1,\dots,\phi_{n+1}$ in $H^1(M)$. Since the functions $\phi_i$ have disjoint support, $V$ is an $(n+1)$-dimensional vector space and we have
\[
\max\{R(u), u\in V\}=\max \{R(\phi_1),...,R(\phi_{n+1})\}.
\]
\begin{sloppypar}
Since $R(\phi_i)\leq \beta_2\frac{\length_n}{a}$ for $i=1,...,n+1$, we have $\max \{R(\phi_1),...,R(\phi_{n+1})\} \leq \beta_2\frac{\length_n}{a}$. Using the variational characterization $\sigma_n(M)=\min_{V\in V_k}\max_{0\not = u\in V} R(u)$, where $V_k$ is the set of all $(k+1)$-dimensional linear subspace of $H^1(M)$, we obtain
\end{sloppypar}
\[
\sigma_n(M)\leq \beta_2\frac{\length_n}{a}.
\]
Because we have obtained the desired result both when $\length_n >1$ and when $\length_n\leq1$, we have
\[
\sigma_n(M)\leq C_2\frac{\length_n}{a},
\]
where $C_2=\max\{\frac{8\arcsinh(1)}{\pi},\beta_2\}$ is a universal constant.

\end{paragraph}

 \begin{paragraph}{Step 3: $C_1\length_n^2\leq \sigma_n$.}
Since $l(c)=\length_n$, one of the $p$ components $\gamma_i$ of $c$ must satisfy $l(\gamma_i)\geq \frac{\length_n}{p}$; we call it $\gamma_{\max}$. The geodesic $\gamma_{\max}$ is contained in the boundary of two sets $M_j$ and $M_k$. We let $\Omega_1=M_j\cup M_k\cup(\partial M_j\cap\partial M_k)$ and $\Omega_2,\dots,\Omega_n$ be the remaining $M_i$. Let $A=\cup_{i=1}^{n}\Omega_i$. On each $\Omega_i$, we consider the mixed Steklov-Neumann problem with Steklov condition on $\Omega_i\cap \partial M$ and Neumann condition on $\partial \Omega_i$. Since the $\Omega_i$ are disjoint, we have
 \[
 \sigma_{n}^N(A)=\min\{\sigma_1^N(\Omega_1),...,\sigma_1^N(\Omega_{n})\}
 \]
and since $A$ contains all boundary components of $M$, we have
\[
\sigma_k(M)\geq \sigma_k^N(A).
\]
Therefore, the proof will be finished if we can show that $\sigma_1^N(\Omega_i)\geq\alpha_1 \length_n^2$ for $i=1,\dots,n$.
If $\Omega_i$ contains only one boundary component $\Sigma_i$, we consider the mixed Steklov-Neumann problem on the half-collar $K(\Sigma_i)$. By comparing the Rayleigh quotients, we see that $\sigma_1^N(\Omega_i)\geq\sigma_1^N(K(\Sigma_i))$. We have already mentioned that a calculation shows that $\sigma_1^N(K(\Sigma_i))=\frac{2\pi}{a}\tanh(\frac{2\pi}{a}\arctan(\frac{1}{\sinh(\frac{a}{2})}))$. Since $a\leq2\arcsinh(1)$, by letting $\beta_3=\frac{\pi}{\arcsinh(1)}\tanh(\frac{\pi}{\arcsinh(1)\arctan(1)})$, we obtain $\sigma_1^N(K)\geq\beta_3$.

If $\Omega_i$ contains several boundary components, we obtain the result by estimating the constants $h_1(\Omega_i)$ et $ h_2(\Omega_i)$ and using Proposition \ref{propCheeg1} and Remark \ref{remMixed}.

\begin{subparagraph}{Estimation of $ h_1(\Omega_i)$}
We recall that
\[
h_1(\Omega_i):=\inf \frac{|\partial D|}{|D|}
\]
where the infimum if taken among all domains $D$ of $\Omega_i$ satisfying $|D|\leq\frac{|\Omega_i|}{2}$, $D\cap \partial M \not =\emptyset$, and such that $M\setminus D$ is also connected and intersects $\partial M$. Given such a domain $D$, we have the following possibilities that are illustrated in Figure \ref{FigIII.4}.

\begin{figure}[H]
\includegraphics[scale=0.23]{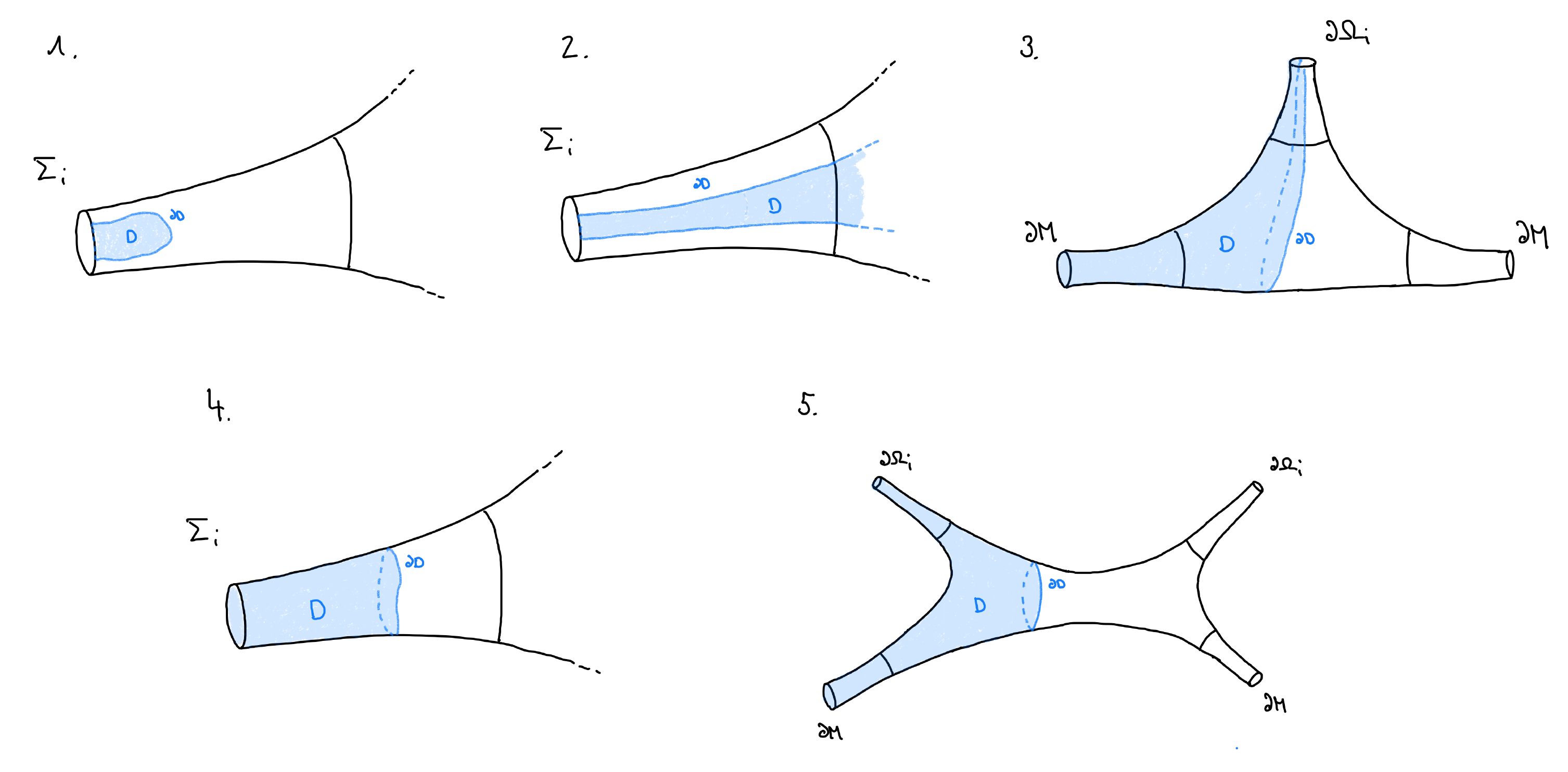}
\centering
\caption{A schematic representation of possible configurations of $D$ in $\Omega_i$ corresponding to each of the five cases}
\label{FigIII.4}
\end{figure}

\begin{enumerate}

\item $\partial D$ intersects a component $\Sigma_i$ of $\partial M$ and $D$ is contained in the collar neighborhood $K(\Sigma_i)$. From the isoperimetric inequality for simply connected domains in the hyperbolic plane we know that $|D|\leq |\partial D|$. So we have $\frac{|\partial D|}{|D|}\geq \frac{|\partial D|}{ |\partial D|}=1$.
\item $\partial D$ intersects a boundary component $\Sigma_i$ but $D$ is not contained in $K(\Sigma_i)$. Since $w(\Sigma_i)\geq\arcsinh(1)$, we have $|\partial D|\geq w(\Sigma_i)\geq\arcsinh(1)$. Therefore $\frac{|\partial D|}{|D|}\geq\frac{\arcsinh(1)}{|M|}=\frac{\arcsinh(1)}{2\pi(2g-2+b)}=:\beta_4$. We see that $\beta_4$ only depends on $g$ and $b$.
\item $\partial D$ intersects a boundary geodesic $\gamma_i$ of $\partial \Omega_i$. Since both $D$ and $\Omega_i\setminus D$ have to intersect $\partial M$, $\partial D$ cannot be contained in $K(\gamma_i)$. Since $l(\gamma_i)\leq L_{g+b}$, $|\partial D|\geq w(\gamma_i)\geq\beta_5$ where $\beta_5$ is a constant depending only on $g$ and $b$. Thus $\frac{|\partial D|}{|D|}\geq\frac{\beta_5}{|M|}=\frac{\beta_5}{2\pi(2g-2+b)}=:\beta_6$.
\item $\partial D$ does not intersect $\partial M$ and a component $\Gamma$ of $\partial D$ is freely homotopic to a boundary component $\Sigma_i$. In this case $\Gamma \cup \Sigma_i$ bounds an annulus and since the Gaussian curvature is negative and $\Sigma_i$ is a geodesic, $|\Gamma|\geq l(\Sigma_i)$. If $|\Gamma|\geq 1$, we have $\frac{|\partial D|}{|D|}\geq \frac{1}{|M|}=\frac{1}{2\pi(2g-2+b)}$. If $|\Gamma|<1$, we deduce that $\frac{|\partial D|}{|D|}\geq\frac{1}{10}$ from  the isoperimetric inequality given in Theorem 3 of \cite{YauIsopcon} and the fact that $|\Gamma|\geq l(\Sigma_i)$. Thus we have $\frac{|\partial D|}{|D|}\geq \min\{\frac{1}{2\pi(2g-2+b)},\frac{1}{10}\}=\frac{1}{2\pi(2g-2+b)}\geq \beta_4$.

\item $\partial D$ does not intersect $\partial M$ and none of its components is freely homotopic to a boundary component. We note that each component of $\partial D$ is freely homotopic to a simple closed geodesic of $\Omega_i$. Let $\Gamma$ be the union of these geodesics. We have $|\partial D|\geq |\Gamma|$. $\Gamma$ divides $\Omega_i$ into two connected components, each of them containing at least one connected component of $\partial M$. We recall that the geodesics $\gamma_1,...,\gamma_p$ divide $M$ into $n+1$ regions and that a subset of these geodesics divides $M$ into $n$ regions $\Omega_1,...,\Omega_n$. Let $\tilde \gamma$ be the union of the geodesics that do not belong to this subset. We have $\gamma_{max}\in \tilde \gamma$. If $|\Gamma|$ were smaller than $l(\tilde \gamma)$ there would be a family of geodesics of $M$, dividing $M$ into $n+1$ regions and their total length would be smaller than $\length_n$, which is a contradiction. Hence we have $|\Gamma|\geq l(\tilde\gamma)\geq l(\gamma_{\max})\geq \frac{\length_n}{p}$ and since $3g-3+b$ is the maximal number of these geodesics, $|\Gamma|\geq \frac{\length_n}{3g-3+b}$. Therefore,  $\frac{|\partial D|}{|D|}\geq \frac{|\Gamma|}{|M|}\geq \frac{\length_n}{(3g-3+b)2\pi(2g-2+b)}=\frac{\length_n}{\beta_7}$ where $\beta_7$ is a constant depending only on $g$ and $b$.
\end{enumerate}
Since we have considered all possibilities for $\partial D$, we have
\[
h_1(\Omega_i)\geq \min\{1,\beta_4,\beta_6, \frac{\length_n}{\beta_7}\}.
\]
Since $\length_n\leq \beta_1$, $h_1(\Omega_i)\geq\beta_8 \length_n$ where $\beta_8=\min\{\beta_1^{-1},\beta_4\beta_1^{-1},\beta_6\beta_1^{-1},\frac{1}{\beta_7}\}$ is a constant depending only on $g$ and $b$.

\end{subparagraph}

\begin{subparagraph}{Estimation of $h_2(\Omega_i)$}
We recall
\[
h_2(\Omega_i):=\inf \frac{|\partial D|}{|D\cap \partial M|}
\]
where the infimum if taken among all domains $D$ of $\Omega_i$ satisfying $|D|\leq\frac{|\Omega_i|}{2}$, $D\cap \partial M \not =\emptyset$, and such that $M\setminus D$ is also connected and intersects $\partial M$. Given such a domain $D$, we have the following possibilities that are illustrated in Figure \ref{FigIII.4}.

\begin{enumerate}

\item $\partial D$ intersects a component $\Sigma_i$ of $\partial M$ and $D$ is contained in the collar neighborhood $K(\Sigma_i)$. Since the Gaussian curvature is negative and $\Sigma_i$ is a geodesic, $|\partial D|\geq l(D\cap\Sigma_i)$. Thus, we have $\frac{|\partial D|}{|D\cap \partial M|}\geq \frac{|D\cap \Sigma_i|}{ |D\cap\Sigma_i|}=1$.

\item 
$\partial D$ intersects a boundary component $\Sigma_i$ but $D$ is not contained in $K(\Sigma_i)$. Since $l(\Sigma_i)\leq2\arcsinh(1)$, we have $|\partial D|\geq w(\Sigma_i)\geq\arcsinh(1)$, which implies$\frac{|\partial D|}{|D\cap \partial M|}\geq\frac{\arcsinh(1)}{ba}\geq\frac{1}{2b}$.

\item 
$\partial D$ intersects a boundary geodesic $\gamma_i$ of $\partial \Omega_i$. Since both $D$ and $\Omega_i\setminus D$ have to intersect $\partial M$, $\partial D$ cannot be contained in $K(\gamma_i)$. Since $l(\gamma_i)\leq L_{g+b}$, $|\partial D|\geq w(\gamma_i)\geq\beta_5$ where $\beta_5$ is a constant depending only on $g$ and $b$. Thus $\frac{|\partial D|}{|D\cap\partial M|}\geq\frac{\beta_5}{ba}=\frac{\beta_5}{2\arcsinh(1)b}=:\beta_9$.

\item 
$\partial D$ does not intersect $\partial M$ and a component $\Gamma$ of $\partial D$ is freely homotopic to a boundary component $\Sigma_i$. Since the Gaussian curvature is negative and $\Sigma_i$ is a geodesic, $|\partial D|\geq l(\Sigma_i)$. Therefore, we have $\frac{|\partial D|}{|D\cap \partial M|}\geq \frac{|\Sigma_i|}{ |\Sigma_i|}=1$.

\item 
$\partial D$ does not intersect $\partial M$ and none of its components is freely homotopic to a boundary component. We note that each component of $\partial D$ is freely homotopic to a simple closed geodesic of $\Omega_i$. Let $\Gamma$ be the union of these geodesics. We have $|\partial D|\geq |\Gamma|$. $\Gamma$ divides $\Omega_i$ into two connected components, each of them containing at least one connected component of $\partial M$. We recall that the geodesics $\gamma_1,...,\gamma_p$ divide $M$ into $n+1$ regions and that a subset of these geodesics divides $M$ into $n$ regions $\Omega_1,...,\Omega_n$. Let $\tilde \gamma$ be the union of the geodesics that do not belong to this subset. We have $\gamma_{max}\in \tilde \gamma$. If $|\Gamma|$ were smaller than $l(\tilde \gamma)$ there would be a family of geodesics of $M$, dividing $M$ into $n+1$ regions and their total length would be smaller than $\length_n$, which is a contradiction. Hence we have $|\Gamma|\geq l(\tilde\gamma)\geq l(\gamma_{\max})\geq \frac{\length_n}{p}$ and since $3g-3+b$ is the maximal number of these geodesics, $|\Gamma|\geq \frac{\length_n}{3g-3+b}$. Therefore, $\frac{|\partial D|}{|D\cap \partial M|}\geq \frac{|\Gamma|}{ab}\geq \frac{\length_n}{(3g-3+b)(2\arcsinh(1)b)}=\beta_{10} \length_n$  and $\beta_{10}$ is a constant depending only on $g$ and $b$. 
\end{enumerate}

Since we have considered all possibilities for $\partial D$, we have
\[
h_2(\Omega_i)\geq \min\{1,\frac{1}{2b},\beta_9,\beta_{10} \length_n\}.
\]
Since $\length_n\leq \beta_1$, $h_2(\Omega_i)\geq\beta_{11} \length_n$, where $\beta_{11}:=\min\{\beta_1^{-1},\frac{1}{2b}\beta_1^{-1},\beta_9\beta_1^{-1},\beta_{10}\}$ is a constant depending only on $g$ and $b$.

\end{subparagraph}
If $\Omega_i$ has several boundary components, we have shown that $\sigma_1^N(\Omega_i)\geq\frac {h_1(\Omega_i) h_2(\Omega_i)}{4}\geq \frac{\beta_8\beta_{11} \length_n^2}{4}=\beta_{12} \length_n^2$ where $\beta_{12}$ is a constant depending only on $g$ and $b$.

We conclude that $\sigma_1^N(\Omega_i)\geq \min\{\beta_3, \beta_{12} \length_n^2\}$. Since $\length_n\leq \beta_1$, $\sigma_1^N(\Omega_i)\geq \beta_{13} \length_n^2$
where $\beta_{13}=\min\{\beta_3\beta_1^{-2},\beta_{12}\}$.
Since it is true for all $\Omega_i$, we obtain
\[
\sigma_{n}(M)\geq\min\{\sigma_1^N(\Omega_1),...,\sigma_1^N(\Omega_n)\}\geq \beta_{13} \length_n^2,
\]
where $\beta_{13}$ is a constant depending only on $g$ and $b$.
\end{paragraph}
\end{proof}

\begin{remark}
We have seen that the presence of the area of $M$ in the denominator of the lower bound of Theorem \ref{prop1} can make this estimate inaaccurate. In Theorem \ref{thmhyp} the weight of the area of $M$ is hidden in the constant since it depends only on the signature of the hyperbolic surface.
\end{remark}

\end{subsection}

\end{section}

\begin{paragraph}{Acknowledgements}
I would like to thank the anonymous referee for the helpful comments and suggestions.
\end{paragraph}

\bibliography{biblio}{}
\bibliographystyle{plain}

\bigskip
\footnotesize

\textsc{Université de Neuchâtel, Institut de mathématiques, Rue Emile-Argand 11, CH-2000 Neuchâtel, Switzerland}\par\nopagebreak
\textit{E-mail address}, \texttt{heleneperrin19@gmail.com}

\end{document}